\author{Christian J. Berghoff}
\title{Universal elliptic Gau{\ss} sums and applications}
\address{Universit\"at Bonn, Mathematisches Institut, Endenicher Allee 60, 53115 Bonn, Germany}
\email{berghoff@math.uni-bonn.de}
\newcounter{algorithm}
\renewcommand{\thealgorithm}{\arabic{algorithm}}
\def\algorithm{\@ifnextchar[{\@algorithma}{\@algorithmb}}
\def\@algorithma[#1]{%
	\refstepcounter{algorithm}
	\trivlist
	\leftmargin\z@
	\itemindent\z@
	\labelsep\z@
	\item[\parbox{\columnwidth}{%
		\hrule
		\hrule
		\noindent\strut\textbf{Algorithm \thealgorithm.} #1
		\hrule
	}]\hfil\vskip0em%
}
\def\@algorithmb{\@algorithma[]}
\pgfplotsset{compat=1.8}
\numberwithin{equation}{section}
\theoremstyle{plain}
\newtheorem{satz}{Theorem}[section]
\newtheorem{prop}[satz]{Proposition}
\newtheorem{lemma}[satz]{Lemma}
\newtheorem{koro}[satz]{Corollary}
\theoremstyle{definition}
\newtheorem{defi}[satz]{Definition}
\theoremstyle{remark}
\newtheorem{bem}[satz]{Remark}
\DeclareMathOperator{\gal}{Gal}
\DeclareMathOperator{\Tr}{Tr}
\DeclareMathOperator{\deg2}{deg}
\DeclareMathOperator{\ord}{ord}
\DeclareMathOperator{\SL}{SL}
\newcommand{\OO}{\mathcal{O}}
\newcommand{\Q}{\mathbb{Q}}
\newcommand{\F}{\mathbb{F}}
\newcommand{\N}{\mathbb{N}}
\newcommand{\Z}{\mathbb{Z}}
\newcommand{\C}{\mathbb{C}}
\begin{document}

\begin{abstract}
We present new ideas for computing the elliptic Gau{\ss} sums introduced in \cite{Mi1, Mi2} which constitute an analogue of the classical cyclotomic Gau{\ss} sums and whose use has been proposed in the context of counting points on elliptic curves and primality tests \cite{Mi_CIDE, CIDE}. By means of certain well-known modular functions we define the universal elliptic Gau{\ss} sums and prove they admit an efficiently computable representation in terms of the $j$-invariant and another modular function. After that, we show how this representation can be used for obtaining the elliptic Gau{\ss} sum associated to an elliptic curve over a finite field $\F_p$, which may then be employed for counting points or primality proving.
\end{abstract}
\maketitle

\tableofcontents
\section{Elliptic curves}\label{sec:ell_kurven}
Within this work we will only consider primes $p>3$ and thus assume that the curve in question is given in the Weierstra{\ss} form
\[ 
E: Y^2=X^3+aX+b=f(X),
 \]
where $a, b \in \F_p$. We will always identify $E$ with its set of points $E(\overline{\F_p})$. For the following well-known statements cf. \cite{Silverman, Washington}. We assume that the elliptic curve is neither singular nor supersingular. It is a standard fact that $E$ is an abelian group with respect to point addition. Its neutral element, the point at infinity, will be denoted $\OO$. For a prime $\ell \neq p$, the $\ell$-torsion subgroup $E[\ell]$ has the shape
\[ 
E[\ell]\cong \Z/\ell\Z \times \Z/\ell\Z.
 \]
In the endomorphism ring of $E$ the Frobenius homomorphism
\[ 
\phi_p: (X, Y) \mapsto (\varphi_p(X), \varphi_p(Y))=(X^p, Y^p)
 \]
satisfies the quadratic equation 
\begin{equation}\label{eq:char_gl}
0=\chi(\phi_p)=\phi_p^2-t\phi_p+p,
 \end{equation} 
where $|t|\leq 2\sqrt{p}$ by the Hasse bound. By restriction $\phi_p$ acts as a linear map on $E[\ell]$. The number of points on $E$ over $\F_p$ is given by $\#E(\F_p)=p+1-t$ and is thus immediate from the value of $t$.\\
The idea of Schoof's algorithm now consists in computing the value of $t$ modulo $\ell$ for sufficiently many small primes $\ell$ by considering $\chi(\phi_p)$ modulo $\ell$ and in afterwards combining the results by means of the Chinese Remainder Theorem. In the original version this requires computations in extensions of degree $O(\ell^2)$.\\
However, a lot of work has been put into elaborating improvements. Let $\Delta=t^2-4p$ denote the discriminant of equation \eqref{eq:char_gl}. Then we distinguish the following cases:
\begin{enumerate}
\item If $\left( \frac{\Delta}{\ell}\right)=1$, then $\ell$ is called an \textit{Elkies prime}. In this case, the characteristic equation factors as $\chi(\phi_p)=(\phi_p-\lambda)(\phi_p-\mu) \mod \ell$, so when acting on $E[\ell]$ the map $\phi_p$ has two eigenvalues $\lambda, \mu \in \F_\ell^*$ with corresponding eigenpoints $P, Q$. Since $\lambda\mu=p$ and $\lambda+\mu=t$, it suffices to determine one of them by solving the discrete logarithm problem
\[ 
\lambda P=\phi_p(P)=(P_x^p, P_y^p),
 \]
which only requires working in extensions of degree $O(\ell)$.
\item If  $\left( \frac{\Delta}{\ell}\right)=-1$, then $\ell$ is called an \textit{Atkin prime}. In this case the eigenvalues of $\phi_p$ are in $\F_{\ell^2}\backslash\F_\ell$ and there is no eigenpoint $P \in E[\ell]$. There is a generic method for computing the value of $t \mod \ell$ for Atkin primes, which is of equal run-time as the one available for Elkies primes. However, it does not yield the exact value of $t \mod \ell$ but only a set of candidates and is thus only efficient provided the cardinality of this set is small.
\end{enumerate}

The approach to Elkies primes was further improved in numerous publications, e.~g. \cite{MaMu, GaMo, BoMoSaSc, Enge, BrLaSu, Sutherland}. We focus on the new ideas introduced in \cite{MiMoSc}. The algorithm it presents allows to work in extensions of degree $n$, where $n$ runs through maximal coprime divisors of $\ell-1$, using so-called \textit{elliptic Gaussian periods}.\\

A variant of this approach was presented in \cite{Mi1} and \cite{MiVu}. It relies instead on so-called \textit{elliptic Gau{\ss} sums}. For a character $\chi: (\Z/\ell\Z)^* \rightarrow \langle\zeta_n\rangle$ of order $n$ with $n \mid \ell-1$ these are defined in analogy to the classical cyclotomic Gau{\ss} sums via
\begin{equation}\label{eq:ell_gs}
G_{\ell,n, \chi}(E)=\sum_{a=1}^{\ell-1}\chi(a)(aP)_V
\end{equation}
for an $\ell$-torsion point $P$ on $E$, where $V=y$ for $n$ even and $V=x$ for $n$ odd. As was shown in \cite{Mi1},
\begin{equation}\label{eq:ell_gs_pot_eigenschaft}
G_{\ell,n, \chi}(E)^n, \frac{G_{\ell,n, \chi}(E)^m}{G_{\ell,n, \chi^m}(E)} \in \F_p[\zeta_n]\quad \text{for}\quad m<n
\end{equation}
holds. In addition, the discrete logarithm in $\F_\ell^*$ of the eigenvalue $\lambda$ corresponding to $P$ can directly be calculated modulo $n$ using the equation
\begin{equation}\label{eq:lambda_aus_ell_gs}
G_{\ell,n, \chi}(E)^p=\chi^{-p}(\lambda)G_{\ell, n, \chi^p}(E)\quad \Rightarrow \quad 
\frac{G_{\ell, n, \chi}(E)^m}{G_{\ell, n, \chi^m}(E)}(G_{\ell, n, \chi}(E)^n)^q=\chi^{-m}(\lambda),
\end{equation}
where $p=nq+m$ holds. When the quantities from equation \eqref{eq:ell_gs_pot_eigenschaft} have been computed, it thus suffices to perform calculations in the extension $\F_p[\zeta_n]$ of degree $\varphi(n)$ to derive the discrete logarithm of $\lambda$ in $\F_\ell^*$ modulo $n$ before composing the modular information by means of the Chinese remainder theorem. In the following sections we will present a way to compute the quantities in question using \textit{universal elliptic Gau{\ss} sums}, which we will define in equation \eqref{eq:tau_ln_lr}, instead of using the definition \eqref{eq:ell_gs}, which requires passing through larger extensions.
\section{Universal elliptic Gau{\ss} sums}
\subsection{Modular functions}
In this section we recall some facts on modular functions which we will later use. We refer the reader to \cite{Apostol, Koblitz, Shimura}.\\

As usual, we denote $\mathbb{H}=\{\tau \in \C: \Im(\tau)>0 \}$ and $\Gamma:=\SL_2(\Z)$. Elements $\gamma=\left( \begin{smallmatrix}
a & b\\
c & d
\end{smallmatrix}\right)$
act on the upper complex half-plane via
\[ 
\gamma: \mathbb{H} \rightarrow \mathbb{H},\quad \tau \mapsto \frac{a\tau+b}{c\tau+d}.
 \]
For $N \in \N$
\begin{align*}
\Gamma_0(N)=&\left\{ \begin{pmatrix}
a & b \\
c & d
\end{pmatrix} \in \Gamma: c \equiv 0 \mod N \right\}\quad \text{and}\\
\Gamma(N)=&\left\{ \begin{pmatrix}
a & b \\
c & d
\end{pmatrix} \in \Gamma: a \equiv d \equiv 1 \mod N, b \equiv c \equiv 0 \mod N \right\}=\left\{
\begin{pmatrix}
1 & 0 \\
0 & 1
\end{pmatrix} \mod N \right\}
\end{align*}
are subgroups of $\Gamma$ that we will later use. 

\begin{defi}\cite[p.~125]{Koblitz}
Let $f(\tau)$ be a meromorphic function on $\mathbb{H}$, $k \in \Z$ and $\Gamma^{\prime}\leq \Gamma$, such that $\Gamma^{\prime} \supseteq \Gamma(N)$ for some $N \in \N$. Furthermore, let $f(\tau)$ satisfy the following conditions:
\begin{enumerate}
\item $f(\gamma\tau)=(c\tau+d)^kf(\tau)$ for all $\gamma=
\left(\begin{smallmatrix}
a & b\\ c & d
\end{smallmatrix}\right) \in \Gamma^{\prime}$. This implies in particular that
$f(\tau)$ may be written as a  Laurent series in terms of
\[
q_N=q^{\frac{1}{N}}=\exp\left(\frac{2\pi i \tau}{N}\right),\text{ where we use the notation } q=q_1.
 \]
\item In the Fourier expansion
\[ 
f(\gamma\tau)=\sum_{n \in \Z} a_nq_N^n
 \]
$a_n=0$ holds for $n<n_0$, $n_0 \in \Z$, for all $\gamma \in \Gamma$. One also says that $f(\tau)$ is meromorphic at the cusps.
\end{enumerate}
Then  $f(\tau)$ is called a \textit{modular function of weight} $k$ \textit{for} $\Gamma^{\prime}$. We denote by $\mathbf{A}_k(\Gamma^{\prime})$ the set of all such modular functions.
\end{defi}
\begin{bem}\label{bem:rep_system_reicht}
It suffices to check the second condition for a set of representatives of $\Gamma\backslash\Gamma^{\prime}$.
\end{bem}

\begin{defi}\cite[p.~112]{Iwaniec}\label{def:def_wl}
	Let $f(q)=f(\tau)$ be a modular function for $\Gamma^{\prime}\subseteq \Gamma$. Let $\ell$ be a prime. Then we define the Fricke-Atkin-Lehner involution $w_\ell$ by
	\[ 
	w_\ell:  f(\tau) \mapsto
	f \left( \left( \begin{matrix}
	0 & -1\\
	\ell & 0
	\end{matrix}\right) \tau \right)= f\left( -\frac{1}{\ell\tau} \right)=:f^*(\tau).
	\]
\end{defi}

\begin{bem}
	For $f(\tau) \in \mathbf{A}_0(\Gamma)$ this yields
	\[ 
	f^*(\tau)=f\left( -\frac{1}{\ell\tau} \right)=f \left(  \begin{pmatrix}
	0 & -1\\
	1 & 0
	\end{pmatrix} \ell\tau \right)=f(\ell\tau).
	\]
\end{bem}

We will make use of the following modular functions:
\begin{align}
E_4(\tau)&=E_4(q)=1+240\sum_{n=1}^{\infty} \frac{n^3q^n}{1-q^n},\label{eq:e4_lr}\\
E_6(\tau)&=E_6(q)=1-504\sum_{n=1}^{\infty} \frac{n^5q^n}{1-q^n},\label{eq:e6_lr}\\
\Delta(\tau)&=\frac{E_4(\tau)^3-E_6(\tau)^2}{1728},\label{eq:delta_lr}\\
j(\tau)&=\frac{E_4(\tau)^3}{\Delta(\tau)},\label{eq:j_lr}\\
p_1(q)&=\frac{1}{12}\ell(E_2(q)-\ell E_2(q^\ell)),\label{eq:p1_def_1}\\
m_\ell(q)&=\ell^s\left(\frac{\eta(q^\ell)}{\eta(q)}\right)^{2s}\quad \text{with}\quad s=\frac{12}{\gcd(12, \ell-1)}\label{eq:ml_lr}.
\end{align}
Here,
\begin{align}
E_2(q)&=1-24\sum_{n=1}^{\infty}\frac{nq^n}{1-q^n}\quad \text{and}\label{eq:e2_lr}\\
\eta(\tau)&=\eta(q)=q^{\frac{1}{24}}\prod_{n=1}^{\infty}(1-q^n)\label{eq:eta_lr}
 \end{align}
is the Dedekind $\eta$-function. The Eisenstein series $E_4$ and $E_6$ are modular functions of weight $4$ and $6$, respectively, for $\Gamma$. $\Delta$ is the discriminant of the elliptic curve $E_\tau$ corresponding to the lattice $\langle 1, \tau \rangle_\Z$ (cf. theorem \ref{satz:iso_kurve_gitter}) and $j$ is its $j$-invariant. They are likewise modular functions for $\Gamma$ of weight $12$ and $0$, respectively. $p_1$ is a modular function of weight $2$ for $\Gamma_0(\ell)$. The function $m_\ell$ was studied in detail in \cite{Mueller}, where it is shown to be a modular function of weight $0$ for $\Gamma_0(\ell)$. We remark that this already follows from general results on so-called eta-quotients established in \cite{Rademacher, Newman}.\\

The $j$-invariant $j: \mathbb{H} \rightarrow \C$ is surjective and plays a fundamental role in the theory of modular functions, as is shown by the following
\begin{satz}\label{satz:j_erzeugt_mod_funk}
Denoting by $\mathbf{H}_0(\Gamma)$ the subset of holomorphic functions of weight $0$ for $\Gamma$ we have
\[ 
\mathbf{A}_0(\Gamma)=\C(j(\tau))\quad\text{and}\quad \mathbf{H}_0(\Gamma)=\C[j(\tau)],
 \]
so the modular functions of weight $0$ are the rational functions in $j$, whereas the holomorphic ones are the polynomials in $j$.
\end{satz}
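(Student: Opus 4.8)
The plan is to prove both asserted equalities by establishing the two inclusions in each case, reducing everything to the single key fact that a weight-$0$ modular function for $\Gamma$ which is holomorphic on $\mathbb{H}$ \emph{and} holomorphic at the cusp must be constant. The inclusions $\C[j(\tau)] \subseteq \mathbf{H}_0(\Gamma)$ and $\C(j(\tau)) \subseteq \mathbf{A}_0(\Gamma)$ are immediate: since $j \in \mathbf{A}_0(\Gamma)$ and the weight-$0$ transformation property is preserved under sums, products and quotients, every rational function in $j$ again lies in $\mathbf{A}_0(\Gamma)$; a polynomial in $j$ is moreover holomorphic on $\mathbb{H}$ because $j$ is, so it lies in $\mathbf{H}_0(\Gamma)$. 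The substance of the theorem is therefore the two reverse inclusions. Throughout I would use three basic properties of $j$: it is holomorphic on $\mathbb{H}$, it has the $q$-expansion $j = q^{-1} + 744 + \cdots$ and so a simple pole at the cusp, and it is surjective, so that for every $c \in \C$ the function $j - c$ vanishes somewhere on $\mathbb{H}$.

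For the inclusion $\mathbf{H}_0(\Gamma) \subseteq \C[j(\tau)]$ I would eliminate the pole at the cusp. Let $f \in \mathbf{H}_0(\Gamma)$; by holomorphy on $\mathbb{H}$ and meromorphy at the cusp its Fourier expansion reads $f = \sum_{n \geq -m} a_n q^n$ with $a_{-m} \neq 0$ for some $m \geq 0$. Because $j^m = q^{-m} + \cdots$ has a pole of exactly order $m$ with leading coefficient $1$, the difference $f - a_{-m} j^m$ is again holomorphic on $\mathbb{H}$, invariant of weight $0$, and its expansion at the cusp starts in degree $\geq -(m-1)$. Iterating this subtraction finitely many times produces a polynomial $P \in \C[X]$ such that $f - P(j)$ is holomorphic on $\mathbb{H}$ and holomorphic at the cusp.

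The crux is then the following claim, which I would isolate as a lemma: any $g \in \mathbf{H}_0(\Gamma)$ that is in addition holomorphic at the cusp is constant. The clean formulation is that such a $g$ descends to a holomorphic function on the compact Riemann surface $X = \mathbb{H}^*/\Gamma$ obtained by adjoining the cusp: $\Gamma$-invariance lets $g$ factor through $\mathbb{H}/\Gamma$, and holomorphy at the cusp (holomorphy as a function of $q$ near $q=0$) extends it over the added point, which is an interior point of $X$. A holomorphic function on a compact Riemann surface attains its maximum modulus at an interior point and is therefore constant by the maximum-modulus principle. Applying this to $f - P(j)$ gives $f - P(j) = c$, whence $f = P(j) + c \in \C[j(\tau)]$. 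This compactness step is where the real work lies, since it encodes that the modular curve has a compact model of genus $0$; one can alternatively argue constancy directly by a maximum-modulus argument on the closure of a fundamental domain $\mathcal{F}$ together with the cusp, using that $g$ is bounded there.

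For the inclusion $\mathbf{A}_0(\Gamma) \subseteq \C(j(\tau))$ I would reduce to the holomorphic case by clearing poles. Let $f \in \mathbf{A}_0(\Gamma)$. Its poles in $\mathbb{H}$ are isolated, and since $f$ is meromorphic at the cusp they do not accumulate there; consequently, modulo $\Gamma$, $f$ has only finitely many poles, at points $\tau_1, \dots, \tau_r \in \mathcal{F}$ of orders $k_1, \dots, k_r$. For each $i$ the $\Gamma$-invariant function $j - j(\tau_i)$ vanishes at $\tau_i$, so multiplying by a sufficiently high power of each factor yields a function $g = f \cdot \prod_{i=1}^r (j - j(\tau_i))^{k_i}$ that is holomorphic on all of $\mathbb{H}$ while remaining invariant of weight $0$. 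By the already established inclusion, $g \in \C[j(\tau)]$, and therefore $f = g \big/ \prod_{i=1}^r (j - j(\tau_i))^{k_i} \in \C(j(\tau))$, completing the proof.
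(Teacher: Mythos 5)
Your proof is correct. The paper itself states this theorem without proof, treating it as a standard fact from its references (Apostol, Koblitz, Shimura), and your argument --- the immediate inclusions $\C[j]\subseteq\mathbf{H}_0(\Gamma)$ and $\C(j)\subseteq\mathbf{A}_0(\Gamma)$, killing the pole at the cusp by subtracting a polynomial in $j$ using $j=q^{-1}+744+\cdots$, concluding constancy via the maximum-modulus principle on the compactification $\mathbb{H}^{*}/\Gamma$, and clearing the finitely many poles of a meromorphic $f$ with factors $(j-j(\tau_i))^{k_i}$ --- is exactly the classical proof found in those sources, so there is nothing to fault.
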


\subsection{Modular functions for \texorpdfstring{$\Gamma_0(\ell)$}{Gamma-0(l)}}
Our goal now is to prove the following statement, which can be seen as a generalization of theorem \ref{satz:j_erzeugt_mod_funk} to the group $\Gamma_0(\ell)$ and which is crucial for later considerations.\\

\begin{satz}\label{satz:j_f_erzeugen_mod_funk}
Let $f(\tau) \in \mathbf{A}_0(\Gamma_0(\ell))\backslash\mathbf{A}_0(\Gamma)$ be a modular function of weight $0$ for $\Gamma_0(\ell)$, but not for $\Gamma$. Then
\[ 
\mathbf{A}_0(\Gamma_0(\ell))=\mathbf{A}_0(\Gamma)(f(\tau))=\C(f(\tau), j(\tau))  
 \]
holds. In particular, for $g(\tau) \in \mathbf{A}_0(\Gamma_0(\ell))$ there exist polynomials $P_1, P_2 \in \C[X, Y]$ with
\[ 
g(\tau)=\frac{P_1(f(\tau), j(\tau))}{P_2(f(\tau), j(\tau))}.
 \]
\end{satz}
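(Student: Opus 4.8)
The plan is to reduce the statement to the Galois theory of modular function fields together with a single group–theoretic input, namely the maximality of $\Gamma_0(\ell)$ in $\Gamma$. Throughout write $\overline{\Gamma'}$ for the image of a subgroup $\Gamma'\leq\Gamma$ in $\mathrm{PSL}_2(\Z)$. By the standard theory recalled in \cite{Shimura, Koblitz}, the field $L:=\mathbf{A}_0(\Gamma(\ell))$ is a finite Galois extension of $\mathbf{A}_0(\Gamma)=\C(j)$, the latter equality being Theorem \ref{satz:j_erzeugt_mod_funk}, with Galois group canonically isomorphic to $G:=\overline{\Gamma}/\overline{\Gamma(\ell)}\cong\mathrm{PSL}_2(\F_\ell)$, the isomorphism being induced by reduction of the matrix entries modulo $\ell$. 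Under this identification each intermediate congruence group $\Gamma(\ell)\leq\Gamma'\leq\Gamma$ has $\mathbf{A}_0(\Gamma')$ equal to the fixed field of $\overline{\Gamma'}/\overline{\Gamma(\ell)}$ in $L$, and the fundamental theorem of Galois theory turns the lattice of such groups into the order–reversed lattice of fields between $\C(j)$ and $L$.

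First I would place $f$ into this picture. Since $f$ is $\Gamma_0(\ell)$–invariant we have $\C(j)\subseteq\C(j,f)\subseteq\mathbf{A}_0(\Gamma_0(\ell))$, where $\mathbf{A}_0(\Gamma_0(\ell))$ is the fixed field of $H:=\overline{\Gamma_0(\ell)}/\overline{\Gamma(\ell)}$, and $H$ is concretely the Borel subgroup of upper–triangular matrices in $\mathrm{PSL}_2(\F_\ell)$. Applying the Galois correspondence of $L/\C(j)$ to the intermediate field $\C(j,f)$, it is the fixed field of some group $U$ with $H\leq U\leq G$, and $U$ is in turn $\overline{\Gamma''}/\overline{\Gamma(\ell)}$ for a uniquely determined group $\Gamma_0(\ell)\leq\Gamma''\leq\Gamma$; thus $\C(j,f)=\mathbf{A}_0(\Gamma'')$.

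It remains to pin down $\Gamma''$, and here the decisive input enters: $\Gamma_0(\ell)$ is maximal in $\Gamma$, equivalently $H$ is a maximal subgroup of $G$. I would prove the latter via the permutation action of $G=\mathrm{PSL}_2(\F_\ell)$ on the $\ell+1$ points of $\mathbb{P}^1(\F_\ell)$: this action is transitive with point stabiliser $H$, and it is even $2$–transitive, hence primitive, so the point stabiliser $H$ admits no proper overgroup in $G$. Consequently $U\in\{H,G\}$, i.e. $\Gamma''\in\{\Gamma_0(\ell),\Gamma\}$. The case $\Gamma''=\Gamma$ would give $\C(j,f)=\mathbf{A}_0(\Gamma)=\C(j)$ and hence $f\in\mathbf{A}_0(\Gamma)$, contradicting the hypothesis $f\notin\mathbf{A}_0(\Gamma)$. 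Therefore $\Gamma''=\Gamma_0(\ell)$ and $\C(f,j)=\mathbf{A}_0(\Gamma_0(\ell))$, which is the first assertion; the representation $g=P_1(f,j)/P_2(f,j)$ for $g\in\mathbf{A}_0(\Gamma_0(\ell))$ is then merely the statement that every element of the field $\C(f,j)=\C(j)(f)$ is a quotient of polynomial expressions in the two generators.

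The main obstacle is the combination of two structural inputs rather than the field–theoretic bookkeeping. The first, imported from the references, is that $\mathbf{A}_0(\Gamma(\ell))/\C(j)$ genuinely is Galois with group $\mathrm{PSL}_2(\F_\ell)$; this is where the analytic content resides, since it requires controlling the Fourier expansions at all cusps (cf. Remark \ref{bem:rep_system_reicht}) and checking that no constants beyond $\C$ are adjoined. The second is the maximality of the Borel subgroup $H$ in $G$. The elementary observation that $\mathbf{A}_0(\Gamma_0(\ell))$ is algebraic of degree at most $\ell+1$ over $\C(j)$ — obtained by forming the elementary symmetric functions of the conjugates $g\circ\gamma_0,\dots,g\circ\gamma_\ell$ for coset representatives $\gamma_i$ of $\Gamma_0(\ell)$ in $\Gamma$, which are $\Gamma$–invariant and hence lie in $\C(j)$ — does not by itself finish the proof, because $\ell+1$ is composite in general, so a subfield $\C(j,f)$ of intermediate degree is a priori conceivable. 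Excluding such a subfield is exactly what the primitivity of the $G$–action on $\mathbb{P}^1(\F_\ell)$ accomplishes, and this is the step I would regard as the heart of the argument.
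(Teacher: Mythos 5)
Your proposal is correct and follows essentially the same route as the paper: both identify $\mathbf{A}_0(\Gamma_0(\ell))$ via the Galois correspondence for $\mathbf{A}_0(\Gamma(\ell))/\C(j)$ with group $\SL_2(\F_\ell)/\{\pm 1\}$ and reduce everything to the maximality of the Borel subgroup. Your appeal to $2$-transitivity (hence primitivity) of the action on $\mathbb{P}^1(\F_\ell)$ is exactly the content of the paper's lemma \ref{lem:keine_zwischengruppen}, whose double-coset argument ($B$ transitive on $\mathbb{P}^1(\F_\ell)\setminus\{\infty\}$, so $G=B\cup BgB$) is just an inline proof of that standard fact.
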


We shall proceed in several steps. First, we show the following
\begin{satz}\label{satz:galois_gruppe_allgemeines}
Let $\Gamma^{\prime} \unlhd \Gamma$ be a normal divisor of finite index in $\Gamma$. 
Then
$\mathbf{A}_0(\Gamma^{\prime})/\mathbf{A}_0(\Gamma)$ is a galois extension with
\[ 
\gal(\mathbf{A}_0(\Gamma^{\prime})/\mathbf{A}_0(\Gamma))\leq \Gamma/\Gamma^{\prime}.
 \]
\end{satz}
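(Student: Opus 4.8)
The plan is to realise the finite group $\Gamma/\Gamma^{\prime}$ as a group of field automorphisms of $\mathbf{A}_0(\Gamma^{\prime})$ whose fixed field is exactly $\mathbf{A}_0(\Gamma)$, and then to conclude by Artin's lemma. For $\gamma \in \Gamma$ and $f \in \mathbf{A}_0(\Gamma^{\prime})$ I would set $(\gamma\cdot f)(\tau)=f(\gamma\tau)$. The first step is to check that this is well defined, i.e.\ that $\gamma\cdot f$ again lies in $\mathbf{A}_0(\Gamma^{\prime})$. This is precisely where normality enters: for $\gamma^{\prime}\in\Gamma^{\prime}$ one computes $(\gamma\cdot f)(\gamma^{\prime}\tau)=f(\gamma\gamma^{\prime}\tau)=f\bigl((\gamma\gamma^{\prime}\gamma^{-1})\gamma\tau\bigr)$, and since $\gamma\gamma^{\prime}\gamma^{-1}\in\Gamma^{\prime}$ by $\Gamma^{\prime}\unlhd\Gamma$ and $f$ is $\Gamma^{\prime}$-invariant, this equals $f(\gamma\tau)=(\gamma\cdot f)(\tau)$. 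Meromorphy on $\mathbb{H}$ is clearly preserved, and the cusp condition is preserved because replacing $f$ by $f\circ\gamma_0$ only reindexes the family $\{f(\gamma\tau):\gamma\in\Gamma\}$, which is exactly the family over which condition (2) of the definition is required.

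Next I would verify the formal properties of this action: each $f\mapsto\gamma\cdot f$ is a $\C$-algebra automorphism of $\mathbf{A}_0(\Gamma^{\prime})$; the assignment descends to $\Gamma/\Gamma^{\prime}$, since any $\gamma\in\Gamma^{\prime}$ acts trivially by $\Gamma^{\prime}$-invariance; and composition is respected (up to order), so that the image $G\leq\aut(\mathbf{A}_0(\Gamma^{\prime}))$ is a finite group which is a homomorphic image of $\Gamma/\Gamma^{\prime}$. In particular $G$ is a quotient of $\Gamma/\Gamma^{\prime}$ and hence $|G|\leq[\Gamma:\Gamma^{\prime}]$, which is the content of the asserted bound $\gal(\mathbf{A}_0(\Gamma^{\prime})/\mathbf{A}_0(\Gamma))\leq\Gamma/\Gamma^{\prime}$.

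The heart of the argument is identifying the fixed field $\mathbf{A}_0(\Gamma^{\prime})^{G}$. The inclusion $\mathbf{A}_0(\Gamma)\subseteq\mathbf{A}_0(\Gamma^{\prime})^{G}$ is immediate, as every $f\in\mathbf{A}_0(\Gamma)$ satisfies $f(\gamma\tau)=f(\tau)$ for all $\gamma\in\Gamma$. For the reverse inclusion, suppose $f\in\mathbf{A}_0(\Gamma^{\prime})$ is fixed by all of $G$; then $f(\gamma\tau)=f(\tau)$ for every $\gamma\in\Gamma$, so $f$ is invariant under the full group $\Gamma$, and since the cusp condition is already built into membership in $\mathbf{A}_0(\Gamma^{\prime})$ we obtain $f\in\mathbf{A}_0(\Gamma)$. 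Hence $\mathbf{A}_0(\Gamma^{\prime})^{G}=\mathbf{A}_0(\Gamma)$. Applying Artin's lemma with $L=\mathbf{A}_0(\Gamma^{\prime})$ and $K=L^{G}=\mathbf{A}_0(\Gamma)$ then shows that $L/K$ is Galois with $\gal(L/K)=G$, completing the proof.

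The main obstacle I expect is the well-definedness in the first step, that is, keeping the whole construction inside $\mathbf{A}_0(\Gamma^{\prime})$: one must use normality to secure closure under the action, and one must track that precomposition by an element of $\Gamma$ neither destroys meromorphy on $\mathbb{H}$ nor the behaviour at the cusps (using \emph{Remark~\ref{bem:rep_system_reicht}} it suffices to control finitely many cusps). Once this is in place the remaining group-theoretic and Artin-type steps are routine.
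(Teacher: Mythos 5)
Your proposal is correct and follows essentially the same route as the paper: the precomposition action $f\mapsto f\circ\gamma$, the conjugation trick $\gamma\delta=(\gamma\delta\gamma^{-1})\gamma$ to keep the action inside $\mathbf{A}_0(\Gamma^{\prime})$, preservation of the cusp condition by reindexing the family $\{f\circ\gamma:\gamma\in\Gamma\}$, and a concluding appeal to Artin/Galois theory, where you make explicit the fixed-field computation $\mathbf{A}_0(\Gamma^{\prime})^{G}=\mathbf{A}_0(\Gamma)$ that the paper leaves as ``Galois theory now implies our claim.'' The only detail the paper adds beyond yours is the observation that $-I$ acts trivially, so the acting group is really $\Gamma/(\pm\Gamma^{\prime})$, which is immaterial for the stated bound.
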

\begin{proof}
Let $f(\tau) \in \mathbf{A}_0(\Gamma^{\prime})$, then $f(\gamma\tau) \in \mathbf{A}_0(\Gamma^{\prime})$ holds for all $\gamma \in \Gamma$: Since $\Gamma^{\prime}$ is a normal divisor in $\Gamma$, the equation $\gamma\delta\gamma^{-1}=\tilde{\delta} \in \Gamma^{\prime}$ holds for $\delta \in \Gamma^{\prime}$. We deduce
\[ 
f(\gamma\delta\tau)=f(\tilde{\delta}\gamma\tau)=f(\gamma\tau)\quad \text{for all}\quad \delta \in \Gamma^{\prime}.
\]
Furthermore, $f(\gamma\tau)$ is meromorphic at the cusps, since $f(\tau)$ is. Replacing $\gamma$ by $\gamma^{-1}$, we see that
\[ 
\gamma^{*}: \mathbf{A}_0(\Gamma^{\prime}) \rightarrow \mathbf{A}_0(\Gamma^{\prime}),\quad f \mapsto f\circ \gamma
 \]
defines a bijection. Due to the invariance of the elements of $\mathbf{A}_0(\Gamma^{\prime})$ under $\pm\Gamma^{\prime}$ (since $-I$ induces the identity) and of $\mathbf{A}_0(\Gamma)$ under $\Gamma$ it follows that the finite group $\Gamma/(\pm\Gamma^{\prime})$ is the automorphism group of $\mathbf{A}_0(\Gamma^{\prime})$ and fixes $\mathbf{A}_0(\Gamma)$. Galois theory now implies our claim.
\end{proof}

A special case is given by
\begin{lemma}\cite[p.~134]{Shimura}
Let $N \in \N$. $\mathbf{A}_0(\Gamma(N))/\mathbf{A}_0(\Gamma)$ is a galois extension with
\[ 
\gal(\mathbf{A}_0(\Gamma(N))/\mathbf{A}_0(\Gamma))\cong \Gamma/(\pm\Gamma(N)).
 \]
\end{lemma}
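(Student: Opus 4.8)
The plan is to obtain the lemma as the sharp, equality case of Theorem~\ref{satz:galois_gruppe_allgemeines}. First I would recall that $\Gamma(N)$ is exactly the kernel of the reduction homomorphism $\Gamma=\SL_2(\Z)\to \SL_2(\Z/N\Z)$, so it is normal of finite index with $\Gamma/\Gamma(N)\cong \SL_2(\Z/N\Z)$. Theorem~\ref{satz:galois_gruppe_allgemeines} therefore applies directly and already yields that $\mathbf{A}_0(\Gamma(N))/\mathbf{A}_0(\Gamma)$ is galois, that $\gamma^*\colon f\mapsto f\circ\gamma$ defines an action of $\Gamma/(\pm\Gamma(N))$ on $\mathbf{A}_0(\Gamma(N))$ by automorphisms, and that this action fixes $\mathbf{A}_0(\Gamma)$. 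The only thing left is to upgrade the inclusion $\gal\bigl(\mathbf{A}_0(\Gamma(N))/\mathbf{A}_0(\Gamma)\bigr)\leq \Gamma/(\pm\Gamma(N))$ to an isomorphism.

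Next I would pin down the fixed field of the full group $\Gamma/(\pm\Gamma(N))$. If $f\in\mathbf{A}_0(\Gamma(N))$ satisfies $\gamma^* f=f$ for every $\gamma\in\Gamma$, then $f(\gamma\tau)=f(\tau)$ for all $\gamma\in\Gamma$, so $f$ is invariant under all of $\Gamma$; being already meromorphic at the cusps as an element of $\mathbf{A}_0(\Gamma(N))$, it therefore lies in $\mathbf{A}_0(\Gamma)$. Hence the fixed field is precisely $\mathbf{A}_0(\Gamma)$. Artin's theorem then gives that the extension is galois with galois group equal to the image $G$ of $\Gamma/(\pm\Gamma(N))$ in $\aut(\mathbf{A}_0(\Gamma(N)))$, together with the equality $[\mathbf{A}_0(\Gamma(N)):\mathbf{A}_0(\Gamma)]=|G|$.

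The genuine obstacle, and the one point needing input beyond the formal Galois theory above, is to show that this action is faithful, i.e.\ that $\gamma^*=\mathrm{id}$ forces $\gamma\in\pm\Gamma(N)$; this is exactly what promotes $G$ to all of $\Gamma/(\pm\Gamma(N))$. To establish it I would invoke explicit generators of $\mathbf{A}_0(\Gamma(N))$, namely the Fricke functions --- the normalized $x$-coordinates of the $N$-division points of the universal elliptic curve --- indexed by $a\in(\tfrac{1}{N}\Z/\Z)^2\setminus\{0\}$. On these, $\Gamma/\Gamma(N)\cong \SL_2(\Z/N\Z)$ acts through its natural linear action on the index $a$, and distinct indices yield distinct functions up to the relation $f_{-a}=f_a$ forced by evenness of the $x$-coordinate; hence the action on functions is faithful precisely modulo the scalars $\pm I$, which is exactly the subgroup $\pm\Gamma(N)/\Gamma(N)$ already quotiented out. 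Since the construction of these division-value functions and their transformation behaviour is developed in \cite{Shimura}, I expect this faithfulness verification to be where the substantive work lies, everything else reducing to Theorem~\ref{satz:galois_gruppe_allgemeines} and Artin's theorem.
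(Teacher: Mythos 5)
Your proof is correct and takes essentially the route the paper itself relies on: the paper states this lemma without proof, citing \cite[p.~134]{Shimura}, and your reconstruction --- the containment from Theorem~\ref{satz:galois_gruppe_allgemeines}, the identification of the fixed field with $\mathbf{A}_0(\Gamma)$ followed by Artin's theorem, and faithfulness of the $\SL_2(\Z/N\Z)/\{\pm 1\}$-action established through the Fricke functions $f_a$ via $f_a\circ\gamma=f_{a\gamma}$ and $f_a=f_b$ iff $b\equiv\pm a$ --- is precisely the argument of the cited source. The only step you leave implicit is that $a\gamma\equiv\pm a \bmod \Z^2$ for all $a$, with an a priori $a$-dependent sign, really forces $\gamma\equiv\pm I \bmod N$ (for $N>2$ one checks sign consistency on $a$, $b$ and $a+b$; for $N\le 2$ one has $-I\equiv I$), a minor point that is carried out in \cite{Shimura}.
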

We remark that $\Gamma(N)=\ker\{\SL_2(\Z) \xrightarrow{\mod N} \SL_2(\Z/N\Z)\}$ obviously implies the isomorphism
$\Gamma/\Gamma(N)\cong \SL_2(\Z/N\Z)$ and hence
\[ 
\gal(\mathbf{A}_0(\Gamma(N))/\mathbf{A}_0(\Gamma))\cong \SL_2(\Z/N\Z)/\{\pm 1\}.
 \]

\begin{koro}
Let $\Gamma^{\prime}$ be a subgroup of $\Gamma$ with $\Gamma(N)\leq \Gamma^{\prime}$, then
\[ 
\gal(\mathbf{A}_0(\Gamma(N))/\mathbf{A}_0(\Gamma^{\prime}))\cong (\pm\Gamma^{\prime})/(\pm\Gamma(N))
 \]
holds. In particular $\mathbf{A}_0(\Gamma^{\prime})$ is a finite extension of $\mathbf{A}_0(\Gamma)=\C(j)$ of degree $[\Gamma: \pm \Gamma^{\prime}]$.
\end{koro}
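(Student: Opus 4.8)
The plan is to obtain the corollary as a direct application of the fundamental theorem of Galois theory to the extension treated in the preceding lemma of Shimura. By that lemma, $\mathbf{A}_0(\Gamma(N))/\mathbf{A}_0(\Gamma)$ is a finite Galois extension with Galois group $G \cong \Gamma/(\pm\Gamma(N))$, where the coset of $\gamma \in \Gamma$ acts on $f \in \mathbf{A}_0(\Gamma(N))$ by $f \mapsto f\circ\gamma$, exactly as in the proof of Theorem \ref{satz:galois_gruppe_allgemeines}. Since $\Gamma(N)$ is the kernel of reduction mod $N$, it is normal in $\Gamma$ and contained in $\Gamma'$, so $\pm\Gamma(N)\leq\pm\Gamma'$ and $H := (\pm\Gamma')/(\pm\Gamma(N))$, the image of $\pm\Gamma'$ under the quotient map $\Gamma \to G$, is a well-defined subgroup of $G$.

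First I would identify the fixed field of $H$. An element $f \in \mathbf{A}_0(\Gamma(N))$ lies in $\mathrm{Fix}(H)$ precisely when $f\circ\gamma = f$ for every $\gamma \in \pm\Gamma'$; because $-I$ acts as the identity, this is equivalent to $f(\gamma\tau)=f(\tau)$ for all $\gamma \in \Gamma'$ (the choice of $\gamma$ versus $\gamma^{-1}$ being irrelevant, as $\Gamma'$ is a group). As such an $f$ is already meromorphic on $\mathbb{H}$ and at the cusps by virtue of being a modular function for $\Gamma(N)$, this invariance is exactly the statement $f \in \mathbf{A}_0(\Gamma')$. Conversely, every $g \in \mathbf{A}_0(\Gamma')$ automatically lies in $\mathbf{A}_0(\Gamma(N))$, since $\Gamma(N)\leq\Gamma'$, and is fixed by $H$. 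Hence $\mathrm{Fix}(H)=\mathbf{A}_0(\Gamma')$, and the Galois correspondence yields $\gal(\mathbf{A}_0(\Gamma(N))/\mathbf{A}_0(\Gamma')) \cong H = (\pm\Gamma')/(\pm\Gamma(N))$, which is the first claim. I would note that $H$ need not be normal in $G$, as $\Gamma'$ need not be normal in $\Gamma$; this is why only the Galois group over $\mathbf{A}_0(\Gamma(N))$ is recorded.

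For the degree assertion I would combine the tower law with $\mathbf{A}_0(\Gamma)=\C(j)$ from Theorem \ref{satz:j_erzeugt_mod_funk}. Since $\mathrm{Fix}(H)=\mathbf{A}_0(\Gamma')$, the fundamental theorem gives $[\mathbf{A}_0(\Gamma'):\mathbf{A}_0(\Gamma)] = [G:H] = [\Gamma:\pm\Gamma(N)]/[\pm\Gamma':\pm\Gamma(N)] = [\Gamma:\pm\Gamma']$, using the multiplicativity of indices in the tower $\pm\Gamma(N)\leq\pm\Gamma'\leq\Gamma$. This is finite because $\Gamma(N)$ has finite index in $\Gamma$, which finishes the proof.

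The main obstacle I anticipate is the fixed-field identification in the second step: one must match the purely group-theoretic invariance under $H$ (invariance modulo $\pm\Gamma(N)$) with the analytic definition of $\mathbf{A}_0(\Gamma')$, taking care that the factor $\pm$ contributes no additional constraint because $-I$ acts trivially, and that the meromorphy-at-the-cusps condition is inherited for free from membership in $\mathbf{A}_0(\Gamma(N))$ rather than needing to be re-verified for $\Gamma'$.
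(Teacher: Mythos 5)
Your proposal is correct and follows essentially the same route as the paper: both invoke the Shimura lemma for $\mathbf{A}_0(\Gamma(N))/\mathbf{A}_0(\Gamma)$, identify $\mathbf{A}_0(\Gamma')$ as the fixed field of $(\pm\Gamma')/(\pm\Gamma(N))$ under the action $f \mapsto f\circ\gamma$, and deduce both the Galois group and the degree $[\Gamma:\pm\Gamma']$ from the Galois correspondence together with multiplicativity of indices. Your version merely spells out in more detail the fixed-field identification (triviality of $-I$, inheritance of meromorphy at the cusps) that the paper states as a one-line equation, which is a harmless elaboration rather than a different argument.
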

\begin{proof}
Since $\Gamma(N)$ is the kernel of the reduction map modulo $N$, $\Gamma(N)\unlhd\Gamma^{\prime}$ holds. This yields
$(\pm\Gamma^{\prime})/(\pm\Gamma(N))\leq \Gamma/(\pm\Gamma(N))
=\gal(\mathbf{A}_0(\Gamma(N))/\mathbf{A}_0(\Gamma))$. Since we have in addition
\[ 
\mathbf{A}_0(\Gamma(N))^{\Gamma^{\prime}}=\{f \in \mathbf{A}_0(\Gamma(N)): f \circ \gamma=f\ \forall \gamma \in \Gamma^{\prime}\}=\mathbf{A}_0(\Gamma{'}),
 \]
the statement concerning the galois group is implied by galois theory. Hence, we obtain $[\mathbf{A}_0(\Gamma(N)):\mathbf{A}_0(\Gamma{'})]=[\pm \Gamma^{\prime}: \pm \Gamma(N)]$ and $[\mathbf{A}_0(\Gamma{'}): \mathbf{A}_0(\Gamma)]=[\Gamma: \pm \Gamma^{\prime}]$.
\end{proof}
For $\Gamma^{\prime}=\Gamma_0(N)$ we glean
\[ 
\gal(\mathbf{A}_0(\Gamma(N))/\mathbf{A}_0(\Gamma_0(N)))\cong (\pm\Gamma_0(N))/(\pm\Gamma(N))\cong 
\left\{\begin{pmatrix}
a & b\\ 0 & d
\end{pmatrix} \in \SL_2(\Z/N\Z) \right\}/\{\pm  1\}.
 \]

Before proving theorem \ref{satz:j_f_erzeugen_mod_funk} we still need the following
\begin{lemma}\label{lem:keine_zwischengruppen}
Let $K$ be a field,
$B= \left\{   
\begin{pmatrix}
a & b \\
0 & d
\end{pmatrix} \mid a, d \in K^*, b \in K
\right\} \subseteq G=\SL_2(K)$.
Then
\begin{align*}
G/B &\rightarrow \mathbb{P}^1(K),\\
\ g \cdot B &\mapsto g \cdot \infty =g\cdot [1, 0], \text{ where } g\cdot [v] \mapsto [gv],\\
\text{i. e., } \begin{pmatrix}
a & b \\
c & d
\end{pmatrix}\cdot B &\mapsto [a, c]
\end{align*} 

defines a bijection and there are no intermediate groups between $G$ and $B$.

\begin{proof}
The first statement is trivial since the $G$-action on $\mathbb{P}^1(K)$ is transitive and $B$ is the stabiliser of $\infty$.\\
Since
\[ 
\begin{pmatrix}
1 & b \\
0 & 1
\end{pmatrix}[0, 1] \mapsto [b, 1],
 \]
the action of $B$ on $\mathbb{P}^1(K)\setminus \{\infty \}$ is transitive.\\

Now let $g, h \in G \setminus B$. Then $h \cdot \infty$ and $g \cdot \infty$ are contained in $\mathbb{P}^1(K)\setminus \{\infty\}$, as $B$  is the stabiliser of $\infty$. On this account there exists $b \in B$ such that
\[ 
h \cdot \infty= b \cdot g \cdot \infty,
 \]
hence $h^{-1}bg \in B$ or equivalently $h \in BgB$ holds. Thus, $B$ and $g$ generate $G$, which means there are no intermediate groups between $B$ and $G$.
\end{proof}
\end{lemma}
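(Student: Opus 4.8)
The plan is to treat the lemma as a purely group-theoretic statement about the action of $G=\SL_2(K)$ on $\mathbb{P}^1(K)$ given by $g\cdot[v]=[gv]$, and to extract both assertions from the orbit structure of this action. The two claims correspond to the two orbits of $B$ on $\mathbb{P}^1(K)$: the fixed point $\infty$ and everything else.

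First I would establish the bijection $G/B\to\mathbb{P}^1(K)$ by orbit--stabiliser. The action of $G$ on $\mathbb{P}^1(K)$ is transitive: for a target $[a,c]$ with $(a,c)\neq(0,0)$, the single linear equation $ad-bc=1$ in the unknowns $(b,d)$ is solvable, so some $g\in G$ has first column $(a,c)^T$ and hence sends $\infty=[1,0]$ to $[a,c]$. The direct computation $\left(\begin{smallmatrix} a & b \\ c & d\end{smallmatrix}\right)\cdot[1,0]=[a,c]$ shows that the stabiliser of $\infty$ is exactly the condition $c=0$, i.e.\ the group $B$. The orbit map therefore descends to the well-defined bijection $gB\mapsto g\cdot\infty$, which is precisely the asserted map; this is the routine part the statement already flags as trivial.

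The substance lies in the absence of intermediate groups, and here I would argue via a Bruhat-type double-coset decomposition. The preliminary observation is that $B$ acts transitively on $\mathbb{P}^1(K)\setminus\{\infty\}$: every such point is affine of the form $[x,1]$, and $\left(\begin{smallmatrix}1 & x \\ 0 & 1\end{smallmatrix}\right)\cdot[0,1]=[x,1]$, so already the unipotent subgroup of $B$ carries $[0,1]$ to any affine point. I would then show that for \emph{every} $g\in G\setminus B$ one has $G=B\cup BgB$. Indeed, take an arbitrary $h\in G\setminus B$; since $B$ is the full stabiliser of $\infty$, both $g\cdot\infty$ and $h\cdot\infty$ lie in $\mathbb{P}^1(K)\setminus\{\infty\}$, so transitivity of $B$ on that set yields some $b\in B$ with $h\cdot\infty=bg\cdot\infty$. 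Then $(bg)^{-1}h$ fixes $\infty$, hence lies in $B$, giving $h\in bgB\subseteq BgB$. Thus $G\setminus B\subseteq BgB$, whence $G=B\cup BgB$. Now if $H$ is any subgroup with $B\subsetneq H\subseteq G$, choosing $g\in H\setminus B$ gives $BgB\subseteq H$ and $B\subseteq H$, so $G=B\cup BgB\subseteq H$ and therefore $H=G$. Equivalently, $B$ together with any single element outside $B$ generates $G$, so no proper intermediate group exists.

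I do not anticipate a genuine obstacle; the one point requiring care is logical rather than computational. One must make sure the double-coset inclusion $G\setminus B\subseteq BgB$ is established for \emph{each} $g\in G\setminus B$ separately, since it is this uniform statement that rules out intermediate groups, not merely the weaker fact that $B$ and one chosen element generate $G$. Framing the conclusion as the identity $G=B\cup BgB$ valid for all $g\notin B$ makes the ``no intermediate groups'' claim immediate and keeps the argument free of case distinctions on $\car(K)$.
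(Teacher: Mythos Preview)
Your proposal is correct and follows essentially the same route as the paper: orbit--stabiliser for the bijection, transitivity of $B$ on $\mathbb{P}^1(K)\setminus\{\infty\}$ via the unipotent matrices, and the double-coset argument showing $h\in BgB$ for any $g,h\in G\setminus B$. The only difference is cosmetic---you spell out the intermediate-subgroup conclusion via $G=B\cup BgB$ more explicitly than the paper does---but the substance is identical.
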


\begin{proof}[Proof of theorem \ref{satz:j_f_erzeugen_mod_funk}]
Let $\ell$ be a prime. The considerations above show
\[ 
G=\gal(\mathbf{A}_0(\Gamma(\ell))/\mathbf{A}_0(\Gamma))\cong \SL_2(\Z/\ell\Z)/\{\pm 1\}
 \]
as well as
\[ 
B=\gal(\mathbf{A}_0(\Gamma(\ell))/\mathbf{A}_0(\Gamma_0(\ell)))\cong\left\{\begin{pmatrix}
a & b\\ 0 & d
\end{pmatrix} \in \SL_2(\Z/\ell\Z) \right\}/\{\pm  1\}.
 \]
Hence, by galois theory the intermediate fields of the extension $\mathbf{A}_0(\Gamma_0(\ell))/\mathbf{A}_0(\Gamma)$ correspond exactly to the intermediate groups between $G$ and $B$. Applying lemma \ref{lem:keine_zwischengruppen} with $K=\F_\ell$ and observing that $-1 \in B$ holds, we deduce there are no intermediate groups between $G$ and $B$ and thus no intermediate fields between $\mathbf{A}_0(\Gamma_0(\ell))$ and $\mathbf{A}_0(\Gamma)$. This directly implies $\mathbf{A}_0(\Gamma_0(\ell))=\C(f(\tau), j(\tau))$ for any $f(\tau) \in \mathbf{A}_0(\Gamma_0(\ell))\backslash\mathbf{A}_0(\Gamma)$.
\end{proof}

Hence, modular functions of weight $0$ for $\Gamma_0(\ell)$, in particular the universal elliptic Gau{\ss} sums to be defined in corollary \ref{koro:gs_modulfunktion_2}, admit a representation as a rational expression in terms of $j(\tau)$ and another modular function $f(\tau) \in \mathbf{A}_0(\Gamma_0(\ell))\backslash \mathbf{A}_0(\Gamma)$. However, theorem \ref{satz:j_f_erzeugen_mod_funk} only implies the existence of such an expression. In order to obtain an efficient algorithm for determining it we will need further results. In addition we have to discuss the choice of the second function $f(\tau)$. For the following results we closely follow \cite[pp.~228--231]{Cox}. However, our results are slightly more general.

\begin{lemma}\label{lem:mod_funk_mi_po}
Let $f(\tau) \in \mathbf{A}_0(\Gamma_0(\ell))\backslash \mathbf{A}_0(\Gamma)$ be holomorphic on $\mathbb{H}$. Then there exists an irreducible polynomial $Q_f(X, Y) \in \C[X, Y]$ such that
\[ 
Q_f(f(\tau), j(\tau))=0.
 \]
\end{lemma}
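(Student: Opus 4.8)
The plan is to take for $Q_f$ the (denominator-cleared) minimal polynomial of $f(\tau)$ over the field $\mathbf{A}_0(\Gamma) = \C(j(\tau))$. By the corollary to Theorem~\ref{satz:galois_gruppe_allgemeines}, $\mathbf{A}_0(\Gamma_0(\ell))$ is a finite extension of $\mathbf{A}_0(\Gamma) = \C(j)$, so in particular $f(\tau)$ is algebraic over $\C(j)$. Let $m(X) = X^n + \sum_{k=0}^{n-1} r_k(j)X^k$ be its minimal polynomial, where, since $j(\tau)$ is transcendental over $\C$, I regard the coefficients $r_k$ as rational functions of $j$. This already gives a relation $m(f,j)=0$; the work is to show the $r_k$ are in fact polynomials in $j$ (so that $Q_f \in \C[X,Y]$) and that irreducibility is preserved, and it is precisely here that the holomorphy hypothesis enters.

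First I would describe the coefficients $r_k$ concretely. Inside the Galois extension $\mathbf{A}_0(\Gamma(\ell))/\mathbf{A}_0(\Gamma)$ (Shimura's lemma above), the roots of $m$ are the distinct conjugates of $f$, each of the form $f \circ \gamma$ for some $\gamma \in \Gamma$, and the $r_k$ are, up to sign, the elementary symmetric functions of these conjugates. Because each $\gamma$ maps $\mathbb{H}$ biholomorphically onto itself and $f$ is holomorphic on $\mathbb{H}$ by assumption, every conjugate $f \circ \gamma$ is holomorphic on $\mathbb{H}$, and hence so is each symmetric function of them. Consequently each $r_k(j(\tau))$ is holomorphic on all of $\mathbb{H}$.

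The crux is to upgrade ``$r_k(j)$ is a rational function of $j$ holomorphic on $\mathbb{H}$'' to ``$r_k(j) \in \C[j]$.'' Here I would invoke the surjectivity of $j : \mathbb{H} \to \C$: if some $r_k$, written in lowest terms, had a pole at a finite value $j_0 \in \C$, I could pick $\tau_0 \in \mathbb{H}$ with $j(\tau_0) = j_0$, whereupon $r_k(j(\tau))$ would fail to be holomorphic at $\tau_0$, contradicting the previous paragraph. Thus no $r_k$ has a finite pole, so each $r_k$ is a polynomial in $j$, and setting $Q_f(X,Y) = X^n + \sum_{k=0}^{n-1} r_k(Y)X^k \in \C[X,Y]$ yields $Q_f(f(\tau), j(\tau)) = 0$.

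Finally, irreducibility of $Q_f$ in $\C[X,Y]$ follows from the irreducibility of $m(X)$ over $\C(j) = \C(Y)$: since $Q_f$ is monic in $X$ it is primitive as an element of $\C[Y][X]$, so Gauss's lemma promotes irreducibility over the fraction field $\C(Y)$ to irreducibility in $\C[Y][X] = \C[X,Y]$. I expect the only genuinely non-formal step to be the third paragraph, namely converting holomorphy on $\mathbb{H}$ into the absence of finite poles for the coefficients viewed as rational functions of $j$; the algebraicity of $f$ and the Gauss's-lemma argument are routine.
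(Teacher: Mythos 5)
Your proof is correct, but it reaches the conclusion by a genuinely different route than the paper. The paper argues constructively: it forms the explicit degree-$(\ell+1)$ product $Q_f(X,\tau)=\prod_{k=0}^{\ell}\bigl(X-f(S_k\tau)\bigr)$ over the coset representatives $S_k$ of $\Gamma/\Gamma_0(\ell)$, checks that the coefficients are $\Gamma$-invariant, holomorphic on $\mathbb{H}$ \emph{and meromorphic at the cusps}, and then invokes $\mathbf{H}_0(\Gamma)=\C[j]$ (theorem \ref{satz:j_erzeugt_mod_funk}) to conclude that the coefficients are polynomials in $j$; irreducibility is then deduced from the absence of intermediate fields between $\mathbf{A}_0(\Gamma_0(\ell))$ and $\mathbf{A}_0(\Gamma)$ (lemma \ref{lem:keine_zwischengruppen}), which forces this degree-$(\ell+1)$ polynomial to coincide with the minimal polynomial. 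You instead start directly from the minimal polynomial over $\C(j)$, so irreducibility over $\C(Y)$ is automatic and transfers to $\C[X,Y]$ by Gauss's lemma — legitimate, since your $Q_f$ is monic in $X$ and hence primitive in $\C[Y][X]$; and because the coefficients are a priori \emph{rational} functions of $j$, you can replace the paper's cusp analysis by the local observation that surjectivity of $j$ excludes finite poles, so a rational function of $j$ holomorphic on all of $\mathbb{H}$ is a polynomial in $j$. The common core is the same — the coefficients are elementary symmetric functions of the holomorphic conjugates $f\circ\gamma$, identified via the Galois action from theorem \ref{satz:galois_gruppe_allgemeines} and Shimura's lemma — but your version buys two simplifications: no verification of meromorphy at the cusps is needed, and lemma \ref{lem:keine_zwischengruppen} is not used at all (indeed your argument goes through even without the hypothesis $f\notin\mathbf{A}_0(\Gamma)$, where the minimal polynomial is linear). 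What you give up is explicitness: the paper's construction exhibits the roots $f(S_k\tau)$ concretely and shows the degree in $X$ is exactly $\ell+1$, which is precisely what the subsequent computational results (theorem \ref{satz:mod_funk_darstellung} and proposition \ref{prop:darstellung_besser}) exploit, whereas your $Q_f$ is characterized only abstractly.
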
 
\begin{proof}
First, we remark that $\{S_k, k=0, \ldots, \ell\}$, where
\[ 
S_k=\begin{pmatrix}
0 & -1 \\ 1 & k
\end{pmatrix}\quad \text{for}\quad 0 \leq k < \ell,\quad
S_\ell=\begin{pmatrix}
1 & 0\\
0 & 1
\end{pmatrix},
 \]
is a system of representatives for $\Gamma/\Gamma_0(\ell)$, as is shown in \cite[p.~54]{Mueller}. We now consider the polynomial in $X$
\[ 
Q_f(X, \tau)=\prod_{k=0}^\ell (X-f(S_k\tau))
 \]
and examine its coefficients. Since they are elementary symmetric polynomials in terms of $f(S_k\tau)$ they are obviously holomorphic on $\mathbb{H}$. Let $\gamma \in \Gamma$. Since the $S_k$ constitute a system of representatives of $\Gamma/\Gamma_0(\ell)$, the values $f(S_k\gamma\tau)$, $k=0, \ldots, \ell$, are a permutation of the values $f(S_k\tau)$. Hence, the coefficients of $Q_f(X, \tau)$ are invariant under $\SL_2(\Z)$. The modular function $f(\tau)$ is meromorphic at the cusps, so this is also the case for $f(S_k\tau)$. Hence, the coefficients are meromorphic at the cusps and thus functions in $\mathbf{H}_0(\Gamma)$. According to theorem \ref{satz:j_erzeugt_mod_funk} they are therefore polynomials in $j(\tau)$. Thus, there exists a polynomial $Q_f(X, Y) \in \C[X, Y]$ satisfying
\[ 
Q_f(X, j(\tau))=\prod_{k=0}^\ell (X-f(S_k\tau)),
 \]
which obviously has $f(\tau)$ as one of its roots. Since there are no intermediate fields between $\mathbf{A}_0(\Gamma_0(\ell))$ and $\mathbf{A}_0(\Gamma)$ as we have seen, the polynomial $Q_f(X, j(\tau))$ has to be irreducible.
\end{proof}

The following statement, which may be proven by a generalisation of the considerations from \cite[pp.~230--231]{Cox} finally provides a first approach for an efficient algorithm.

\begin{satz}\label{satz:mod_funk_darstellung}
Let $g(\tau) \in \mathbf{A}_0(\Gamma_0(\ell))$ be a modular function and $f(\tau) \in \mathbf{H}_0(\Gamma_0(\ell))\backslash\mathbf{H}_0(\Gamma)$. Then $g$ admits the representation
\[ 
g(\tau)=\frac{Q(f(\tau), j(\tau))}{\frac{\partial Q_f}{\partial X}(f(\tau), j(\tau))},
 \]
where $Q(X, j(\tau)) \in \C(j(\tau))[X]$ is a polynomial in $X$ which can be explicitly specified in terms of
\[ 
\left\{g(S_k\tau), f(S_k\tau), i=0, \ldots, \ell\right\}.
 \]
If $g(\tau)$ is holomorphic, one even obtains $Q(X, j(\tau)) \in \C[j(\tau)][X]$; hence, the enumerator of the rational expression is a polynomial in $f$ and $j$.
\end{satz}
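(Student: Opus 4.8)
The plan is to read the asserted identity as an explicit instance of Lagrange interpolation in the simple extension $\mathbf{A}_0(\Gamma_0(\ell))=\C(f(\tau),j(\tau))$ over $\mathbf{A}_0(\Gamma)=\C(j(\tau))$. First I would record that Lemma \ref{lem:mod_funk_mi_po} applies: $f$ is holomorphic on $\mathbb{H}$ and, since $f\notin\mathbf{H}_0(\Gamma)$ together with holomorphy forbids $f\in\mathbf{A}_0(\Gamma)$, the polynomial $Q_f(X,j(\tau))=\prod_{k=0}^{\ell}(X-f(S_k\tau))$ is the irreducible minimal polynomial of $f(\tau)$ over $\C(j(\tau))$. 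As we are in characteristic $0$ it is separable, so the $\ell+1$ conjugates $f(S_k\tau)$ are pairwise distinct; using $S_\ell=I$ this gives
\[
\frac{\partial Q_f}{\partial X}(f(\tau),j(\tau))=\prod_{i\neq \ell}\bigl(f(\tau)-f(S_i\tau)\bigr)\neq 0,
\]
so the claimed denominator is a genuine nonzero element.

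The candidate numerator is the interpolation polynomial
\[
Q(X,\tau):=\sum_{k=0}^{\ell} g(S_k\tau)\prod_{\substack{i=0\\ i\neq k}}^{\ell}\bigl(X-f(S_i\tau)\bigr),
\]
of degree $\ell$ in $X$. The first thing to verify is that its coefficients lie in $\mathbf{A}_0(\Gamma)=\C(j(\tau))$. The decisive input is $\Gamma$-invariance: exactly as in the proof of Lemma \ref{lem:mod_funk_mi_po}, for $\gamma\in\Gamma$ there is a single permutation $\sigma$ of $\{0,\dots,\ell\}$ governing the cosets, so that $\Gamma_0(\ell)$-invariance of both $f$ and $g$ yields $f(S_k\gamma\tau)=f(S_{\sigma(k)}\tau)$ and $g(S_k\gamma\tau)=g(S_{\sigma(k)}\tau)$ simultaneously. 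Re-indexing the outer sum and the omitted-factor products by $\sigma$ then shows $Q(X,\gamma\tau)=Q(X,\tau)$. Since $f,g$ are modular functions, the coefficients are meromorphic on $\mathbb{H}$ and at the cusps, whence Theorem \ref{satz:j_erzeugt_mod_funk} places them in $\C(j(\tau))$, giving $Q(X,j(\tau))\in\C(j(\tau))[X]$. If $g$ is moreover holomorphic on $\mathbb{H}$, then all $g(S_k\tau)$ and $f(S_i\tau)$ are holomorphic on $\mathbb{H}$, hence so are the coefficients, and by $\mathbf{H}_0(\Gamma)=\C[j(\tau)]$ they are polynomials in $j(\tau)$, i.e. $Q(X,j(\tau))\in\C[j(\tau)][X]$.

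It then remains to check that this $Q$ does the job, which is the evaluation step where the interpolation design pays off. Substituting $X=f(\tau)=f(S_\ell\tau)$, the $k$-th summand contains, whenever $k\neq\ell$, the factor indexed by $i=\ell$, namely $f(S_\ell\tau)-f(S_\ell\tau)=0$; hence every term with $k\neq\ell$ vanishes and only $k=\ell$ survives, giving
\[
Q(f(\tau),j(\tau))=g(\tau)\prod_{i\neq \ell}\bigl(f(\tau)-f(S_i\tau)\bigr)=g(\tau)\cdot\frac{\partial Q_f}{\partial X}(f(\tau),j(\tau)).
\]
Dividing by the (nonzero) denominator yields the asserted representation.

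I expect the genuinely delicate step to be the $\Gamma$-invariance bookkeeping in the second paragraph: one must be certain that the \emph{same} permutation $\sigma$ acts on the outer index $k$ and on the family of omitted factors, so that $\prod_{i\neq k}$ transforms into $\prod_{i'\neq\sigma(k)}$ with no spurious or missing factors, and that the permutation is dictated purely by the coset action (hence common to $f$ and $g$). Everything else — separability in characteristic $0$, the degree count ($\deg_X Q=\ell=\deg_X \tfrac{\partial Q_f}{\partial X}$), the cusp conditions, and the vanishing in the evaluation — is then routine; the only other point worth stating carefully is the opening remark that $f\in\mathbf{H}_0(\Gamma_0(\ell))\setminus\mathbf{H}_0(\Gamma)$ indeed forces $f\notin\mathbf{A}_0(\Gamma)$, legitimising the appeal to Lemma \ref{lem:mod_funk_mi_po}.
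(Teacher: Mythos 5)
Your proof is correct and is precisely the argument the paper intends: it defers to a generalisation of the considerations in Cox (pp.~230--231), which is exactly your Lagrange-interpolation construction $Q(X,\tau)=\sum_k g(S_k\tau)\prod_{i\neq k}\bigl(X-f(S_i\tau)\bigr)$, the coset-permutation argument giving $\Gamma$-invariance of the coefficients, and the evaluation at $X=f(S_\ell\tau)=f(\tau)$ producing the factor $\frac{\partial Q_f}{\partial X}(f(\tau),j(\tau))$, nonzero by separability. Your care about the simultaneous permutation $\sigma$ for $f$ and $g$ and about $f\notin\mathbf{A}_0(\Gamma)$ (so that Lemma \ref{lem:mod_funk_mi_po} applies) matches what the cited generalisation requires.
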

\begin{bem}
The case we are interested in, viz. when the function $g(\tau)$ is holomorphic, is also a direct consequence of lemma \ref{lem:neukirch-lemma} from \cite[pp.~206--208]{Neukirch}, which we present below.\\
\end{bem}

The representation from theorem \ref{satz:mod_funk_darstellung} is far from optimal from an algorithmic point of view since it does not allow to obtain good bounds on the powers of $j$, the coefficients of which grow very fast, occurring in the enumerator. We will rather make use of the following statements.

\begin{lemma}\cite[pp.~206--208]{Neukirch}\label{lem:neukirch-lemma}
Let $L/K$ be an extension of fields, $\OO \subseteq K$ be a ring. Let $\alpha \in L\backslash K$ have the minimal polynomial $f(X) \in K[X]$ of degree $n$. Then the $\OO$-module 
\[ 
C_{\alpha}=\{x \in L\mid \Tr_{L/K}(x\OO[\alpha]) \subseteq \OO\}
 \] 
has the $\OO$-basis 
\[ 
\left\{\frac{\alpha^i}{f^{\prime}(\alpha)}, i=0, \ldots, n-1\right\}.
 \]
\end{lemma}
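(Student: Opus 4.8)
The plan is to recognise $C_\alpha$ as the trace-dual (complementary module) of $\OO[\alpha]$ and to compute it explicitly by Lagrange interpolation. I would first fix the setting in which the statement is both correct and needed: take $L=K(\alpha)$, so $[L:K]=n$, and work with the trace pairing $T(x,y)=\Tr_{L/K}(xy)$. Two preliminary observations make everything run. Since $f$ is a minimal polynomial in the characteristic-zero situation of the paper ($K=\C(j)$, $\OO=\C[j]$), it is separable, so $f'(\alpha)\neq 0$ and $T$ is non-degenerate; and since $f$ is monic with coefficients in $\OO$ — exactly the content of lemma \ref{lem:mod_funk_mi_po} together with theorem \ref{satz:j_erzeugt_mod_funk} in the application — the relation $f(\alpha)=0$ lets me reduce higher powers of $\alpha$, so that $\OO[\alpha]=\bigoplus_{i=0}^{n-1}\OO\alpha^i$ is $\OO$-free with basis $1,\alpha,\dots,\alpha^{n-1}$. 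With this, $C_\alpha=\{x\in L: T(x,\alpha^i)\in\OO,\ 0\le i\le n-1\}$ is precisely the $\OO$-module dual to $\OO[\alpha]$ under $T$.

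The heart of the argument is to exhibit the basis dual to $\{\alpha^i\}$. Let $\alpha=\alpha_1,\dots,\alpha_n$ denote the roots of $f$ and $\sigma_1,\dots,\sigma_n$ the corresponding $K$-embeddings of $L$, so that $\Tr_{L/K}=\sum_k\sigma_k$. I would start from the interpolation identity
\[ \sum_{k=1}^{n}\frac{f(X)}{X-\alpha_k}\cdot\frac{\alpha_k^{\,r}}{f'(\alpha_k)}=X^{r},\qquad 0\le r\le n-1, \]
which holds because both sides are polynomials in $X$ of degree at most $n-1$ agreeing at the $n$ points $X=\alpha_m$ (on the left only the $k=m$ term survives there, and it equals $\alpha_m^{\,r}$). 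Writing $f(X)/(X-\alpha)=\sum_{j=0}^{n-1}b_jX^j$ with $b_j\in\OO[\alpha]$ and applying $\sigma_k$ to the coefficients, comparison of the coefficient of $X^j$ and summation over $k$ convert this identity into
\[ \Tr_{L/K}\!\left(\alpha^{r}\cdot\frac{b_j}{f'(\alpha)}\right)=\sum_{k=1}^{n}\sigma_k\!\left(\frac{b_j\alpha^{r}}{f'(\alpha)}\right)=\delta_{jr}. \]
Thus $\{b_j/f'(\alpha)\}_{j=0}^{n-1}$ is exactly the dual basis of $\{\alpha^r\}_{r=0}^{n-1}$, and by the duality observation of the first step $C_\alpha=\bigoplus_{j=0}^{n-1}\OO\cdot\big(b_j/f'(\alpha)\big)$.

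It remains to replace the $b_j$ by the powers $\alpha^i$. Carrying out the polynomial division explicitly gives $b_j=\alpha^{\,n-1-j}+c_{n-1}\alpha^{\,n-2-j}+\cdots+c_{j+1}$, where the $c_m\in\OO$ are the coefficients of $f$. Hence the matrix expressing $(b_{n-1},\dots,b_0)$ in terms of $(1,\alpha,\dots,\alpha^{n-1})$ is unitriangular with entries in $\OO$, so it is invertible over $\OO$ and $\bigoplus_j\OO b_j=\bigoplus_i\OO\alpha^i=\OO[\alpha]$. Dividing by the common factor $f'(\alpha)$ turns the identity $C_\alpha=\bigoplus_j\OO\,(b_j/f'(\alpha))$ into
\[ C_\alpha=\frac{1}{f'(\alpha)}\,\OO[\alpha]=\bigoplus_{i=0}^{n-1}\OO\cdot\frac{\alpha^i}{f'(\alpha)}, \]
which is the asserted $\OO$-basis.

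The main obstacle is the second step: finding and justifying the interpolation identity and translating it cleanly into the trace relation $\Tr_{L/K}(\alpha^r b_j/f'(\alpha))=\delta_{jr}$, since this is where the algebraic content lives. Two hypotheses must be kept in view throughout, as the conclusion fails without them: separability of $f$ (needed for $f'(\alpha)\neq0$ and non-degeneracy of $T$) and integrality of $\alpha$ over $\OO$, i.e. $f\in\OO[X]$ (needed both for $\OO[\alpha]$ to be $\OO$-free of rank $n$ and for the transition matrix in the third step to be $\OO$-invertible). In the paper's setting both are automatic — the characteristic is zero and $\alpha=f(\tau)$ is integral over $\C[j]$ by lemma \ref{lem:mod_funk_mi_po} — so no extra assumptions are required there. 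A minor point worth flagging is that the statement is really about $L=K(\alpha)$: for a strictly larger $L$ the module $C_\alpha$ acquires contributions from the orthogonal complement of $K(\alpha)$ and is no longer the lattice claimed.
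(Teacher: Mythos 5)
Your proof is correct and takes essentially the same route as the paper's source: the paper gives no in-text proof but cites Neukirch, whose argument is exactly yours --- Euler's interpolation identity $\sum_{k}\frac{f(X)}{X-\alpha_k}\frac{\alpha_k^{r}}{f'(\alpha_k)}=X^{r}$ yielding $\Tr_{L/K}\bigl(\alpha^{r}b_j/f'(\alpha)\bigr)=\delta_{jr}$, so that $\{b_j/f'(\alpha)\}$ is the trace-dual basis of $\{\alpha^{i}\}$, followed by the unitriangular change of basis showing $\bigoplus_j \OO b_j=\OO[\alpha]$. Your flagged caveats (separability, $f\in\OO[X]$, and $L=K(\alpha)$) correctly make explicit the hypotheses left implicit in the paper's statement, and they are indeed satisfied in the paper's applications.
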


\begin{koro}\label{koro:darst_besser_allgemein}
Let $g(\tau) \in \mathbf{H}_0(\Gamma_0(\ell))\backslash\mathbf{H}_0(\Gamma)$ be a holomorphic modular function and let $f(\tau)  \in \mathbf{H}_0(\Gamma_0(\ell))\backslash\mathbf{H}_0(\Gamma)$ with minimal polynomial $Q_f(X, j)$ with $\deg2_j(Q_f)=v$. Then $g(\tau)$ admits a representation of the form
\[ 
g(\tau)=\frac{\sum_{i=0}^{v-1}a_i j(\tau)^i}{\frac{\partial Q_f}{\partial Y}(f(\tau), j(\tau))},
 \]
where 
\[ 
a_i \in \{h(\tau) \in \C(f(\tau)): h(\tau)\text{ holomorphic}\}.
 \]
\end{koro}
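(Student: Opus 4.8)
The plan is to invoke Lemma~\ref{lem:neukirch-lemma} with the roles of $f$ and $j$ interchanged compared with Theorem~\ref{satz:mod_funk_darstellung}. I set $L=\mathbf{A}_0(\Gamma_0(\ell))=\C(f,j)$, $K=\C(f(\tau))$, $\OO=\OO_f:=\{h\in\C(f):h\text{ holomorphic on }\mathbb{H}\}$ and $\alpha=j(\tau)$. By Lemma~\ref{lem:mod_funk_mi_po} the polynomial $Q_f(X,Y)$ is irreducible, so Gauss's lemma shows that $Q_f(f,Y)$ is, up to its leading coefficient $c(f)\in\C[f]\subseteq\OO_f$ in the variable $Y$, the monic minimal polynomial $p(Y)$ of $j$ over $\C(f)$, of degree $v$. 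Lemma~\ref{lem:neukirch-lemma} then provides the $\OO_f$-basis $\{j^i/p'(j):0\le i\le v-1\}$ of the module $C_j=\{x\in L:\Tr_{L/K}(x\,\OO_f[j])\subseteq\OO_f\}$. Because $p'(j)$ and $\frac{\partial Q_f}{\partial Y}(f,j)$ differ only by the factor $c(f)\in\OO_f$, membership $g\in C_j$ immediately yields an expansion $g=\bigl(\sum_{i=0}^{v-1}a_i j^i\bigr)/\frac{\partial Q_f}{\partial Y}(f,j)$ with $a_i\in\OO_f$, which is exactly the claim. Hence it suffices to prove $g\in C_j$.

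To do this I would pass to the ring $R:=\mathbf{H}_0(\Gamma_0(\ell))$ of weight $0$ modular functions for $\Gamma_0(\ell)$ holomorphic on $\mathbb{H}$; this is the coordinate ring $\OO(Y_0(\ell))$ of the affine modular curve $Y_0(\ell)=\Gamma_0(\ell)\backslash\mathbb{H}$, and since $Y_0(\ell)$ is a smooth (hence normal) affine curve, finite over $\mathbb{A}^1$ via $j$, the ring $R$ is the integral closure of $\C[j]$ in $L$; in particular $R$ is integrally closed in $L$, and $\OO_f=R\cap\C(f)$ is integrally closed in $K$. The crucial claim is that $R$ is integral over $\OO_f$. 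Granting it, every element of $R$ has all its $K$-conjugates integral over $\OO_f$, so $\Tr_{L/K}(R)\subseteq\OO_f$; that is, $R$ is contained in its codifferent $C_R=\{x\in L:\Tr_{L/K}(xR)\subseteq\OO_f\}$. As $\OO_f[j]\subseteq R$, the codifferent is inclusion-reversing and gives $C_R\subseteq C_j$, whence $g\in R\subseteq C_R\subseteq C_j$ and Lemma~\ref{lem:neukirch-lemma} concludes. (Concretely, $g\in C_j$ unwinds to $\Tr_{L/K}(g\,j^k)\in\OO_f$ for $0\le k\le v-1$, and since such a trace automatically lies in $\C(f)$, the only thing to check is its holomorphy on $\mathbb{H}$.)

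I expect the integrality of $R$ over $\OO_f$ to be the main obstacle, and it is really a statement about the cusps. Writing the trace $\Tr_{L/K}(g\,j^k)$ as the sum of $g\,j^k$ over the fibre of $f$, and using that $g$ and $j$ are holomorphic on $\mathbb{H}$ with poles confined to the finitely many cusps, the resulting rational function of $f$ can acquire a pole only at a cusp value $f(\mathrm{cusp})$; this is harmless on $\mathbb{H}$ exactly when $f(\mathrm{cusp})\notin f(\mathbb{H})$. For the functions one actually uses—above all the eta quotient $m_\ell$—this holds automatically: $m_\ell$ is nowhere zero on $\mathbb{H}$ and its divisor is supported on the two cusps of $X_0(\ell)$, so $m_\ell\colon Y_0(\ell)\to\mathbb{G}_m$ is finite, $R$ is finite over $\C[m_\ell^{\pm1}]\subseteq\OO_{m_\ell}$, and the cusp values $0,\infty$ avoid $m_\ell(\mathbb{H})$. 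The same works whenever $f$ has a pole at every cusp, for then $f\colon Y_0(\ell)\to\mathbb{A}^1$ is already finite; it is precisely at this point that Cox's cusp analysis has to be generalised to the chosen second function $f$.
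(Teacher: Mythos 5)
Your proposal follows the same skeleton as the paper's proof: both invoke lemma~\ref{lem:neukirch-lemma} with $K=\C(f(\tau))$, $L=K(j(\tau))=\mathbf{A}_0(\Gamma_0(\ell))$, $\alpha=j(\tau)$ and $\OO$ the ring of holomorphic elements of $\C(f)$, and both reduce the corollary to the membership $g\in C_j$. The difference is in how that membership is justified, and there your version is substantially more careful. The paper disposes of it in one sentence: every $z\in\OO[j]$ is holomorphic on $\mathbb{H}$, hence so are $g(\tau)z$ and, \emph{obviously}, $\Tr_{L/K}(g(\tau)z)$ --- leaving the holomorphy of the trace unargued. Your fibre-sum description of the trace shows exactly why this is not obvious: $\Tr_{L/K}(h)$ is a rational function $T$ of $f$ whose poles can only sit at the values of $f$ at the cusps (the only poles of a holomorphic $h$), so $T(f(\tau))$ is holomorphic on $\mathbb{H}$ only if those cusp values are not attained by $f$ on $\mathbb{H}$. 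This condition is not among the corollary's hypotheses, and indeed it can fail: for instance $f=m_\ell\cdot(j-c)$ with $v\ge 2$ lies in $\mathbf{H}_0(\Gamma_0(\ell))\backslash\mathbf{H}_0(\Gamma)$, vanishes at the cusp $\infty$, and also vanishes at every $\tau\in\mathbb{H}$ with $j(\tau)=c$; a holomorphic $g$ with a pole of sufficiently high order at that cusp (high enough to survive the tame ramification of $f$ there) then has $\Tr_{L/K}(g)\notin\OO$, so the paper's one-line argument breaks down for such $f$. The paper only supplies the missing fact later, and only for $f=m_\ell$, in the proof of proposition~\ref{prop:darstellung_besser}, where $m_\ell(\mathbb{H})=\C^*$ is established and $\OO=\C[m_\ell,m_\ell^{-1}]$ identified --- which is precisely your observation that $m_\ell$ is nonvanishing on $\mathbb{H}$ with divisor supported at the two cusps, making $\mathbf{H}_0(\Gamma_0(\ell))$ finite over $\C[m_\ell^{\pm 1}]$ and keeping all traces integral. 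So your \emph{conditional} proof is complete in the only case the paper actually uses, and you have correctly located the genuine gap in the corollary's stated generality rather than introduced one of your own. Your handling of the leading coefficient --- passing from $Q_f(f,Y)$ to the monic minimal polynomial via Gauss's lemma and absorbing $c(f)\in\C[f]\subseteq\OO$ into the $a_i$ --- also repairs a detail the paper elides, since lemma~\ref{lem:neukirch-lemma} concerns the minimal polynomial while $Q_f(f,Y)$ need not be monic in $Y$.
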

\begin{proof}
We apply lemma \ref{lem:neukirch-lemma} for $K=\C(f(\tau))$, $L=K(j(\tau))=\mathbf{A}_0(\Gamma_0(\ell))$ and $\alpha=j(\tau)$ and hence $f(X)=Q_f(f(\tau), X)$. Furthermore, we choose
\[ 
\OO=\{h(\tau) \in K: h(\tau)\text{ holomorphic}\}.
 \]

Obviously, all elements $z$ of $\OO[j]$ and hence $g(\tau)z$ as well as $\Tr_{L/K}(g(\tau)z)$ are holomorphic. So $g(\tau) \in C_j$. Using lemma \ref{lem:neukirch-lemma} we obtain the assertion.
\end{proof}

We now consider special values for $f(\tau)$.
\begin{enumerate}
\item The most obvious and historically first choice is $f(\tau)=j(\ell\tau)$. In this case we write $Q_f(X, Y)=\Phi_\ell(X, Y)$ and call $\Phi_\ell$ the $\ell$-th modular polynomial. The modular polynomial has coefficients in $\Z$ and is symmetric in $X$ and $Y$ \cite[pp.~229--234]{Cox}. The main problem when using it is that its coefficients grow exponentially in $\ell$ \cite{Cohen}.
\item In \cite{Mueller} the choice $f(\tau)=m_\ell(\tau)$, where $m_\ell(\tau)$ is defined as in \eqref{eq:ml_lr}, was extensively examined and made applicable in the context of Schoof's algorithm. The construction of $m_\ell(\tau)$ and the relation between $\eta(\tau)$ and $\Delta(\tau)$ imply $m_\ell(\tau)$ is also holomorphic on $\mathbb{H}$. In this case we write $Q_f(X, Y)=M_\ell(X, Y)$. The polynomial $M_\ell$  has likewise coefficients in $\Z$ and the degree in the second variable is $v=\frac{\ell-1}{\gcd(12,\, \ell-1)}$, as is shown in \cite[pp.~61--62]{Mueller}. This causes its coefficients to grow much slower than those of $\Phi_\ell$.
\end{enumerate}

Using this specialisation we obtain the following
\begin{prop}\label{prop:darstellung_besser}
Let $g(\tau) \in \mathbf{H}_0(\Gamma_0(\ell))\backslash\mathbf{H}_0(\Gamma)$ be a holomorphic modular function. Then $g(\tau)$ admits the following representation:
\[ 
g(\tau)=\frac{Q(m_\ell(\tau), j(\tau))}{m_\ell(\tau)^k\frac{\partial M_\ell}{\partial Y}(m_\ell(\tau), j(\tau))}
 \]
for a $k\geq 0$ and a polynomial $Q(X, Y) \in \C[X, Y]$ with $\deg2_Y(Q)<v=\deg2_Y(M_\ell)$.

\end{prop}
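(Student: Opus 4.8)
The plan is to specialise the general representation of Corollary \ref{koro:darst_besser_allgemein} to the function $f(\tau)=m_\ell(\tau)$ from \eqref{eq:ml_lr} and then to clear denominators in a controlled way. Taking $f=m_\ell$, so that $Q_f=M_\ell$ and $v=\deg2_Y(M_\ell)$, Corollary \ref{koro:darst_besser_allgemein} immediately yields
\[
g(\tau)=\frac{\sum_{i=0}^{v-1}a_i\, j(\tau)^i}{\frac{\partial M_\ell}{\partial Y}(m_\ell(\tau), j(\tau))}
\]
with each coefficient $a_i$ lying in the ring $\OO=\{h(\tau)\in\C(m_\ell(\tau)): h\text{ holomorphic on }\mathbb{H}\}$. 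The numerator is already a polynomial in $j$ of degree $<v$, so everything reduces to pinning down the precise shape of the $a_i$, i.e. to determining the ring $\OO$.

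I would prove that $\OO=\C[m_\ell(\tau), m_\ell(\tau)^{-1}]$, the Laurent polynomials in $m_\ell$. The decisive input is the value distribution of $m_\ell$ on $\mathbb{H}$. By \eqref{eq:ml_lr} and \eqref{eq:eta_lr}, $m_\ell$ is an eta-quotient, and since $\eta$ is holomorphic and nowhere vanishing on $\mathbb{H}$, so is $m_\ell$; its zeros and poles are therefore supported on the two cusps $0$ and $\infty$ of $\Gamma_0(\ell)$. The $q$-expansion shows a zero of order $v$ at $\infty$ and, correspondingly, a pole at $0$. Extending $m_\ell$ to a holomorphic map to $\mathbb{P}^1(\C)$ on the compactification of $\mathbb{H}/\Gamma_0(\ell)$, this map is nonconstant, hence surjective, and takes the values $0$ and $\infty$ only at the two cusps. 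Consequently every $\alpha\in\C^*$ is attained by $m_\ell$ at some point of $\mathbb{H}$.

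Granting this, write $a_i=R_i(m_\ell)$ with $R_i\in\C(X)$ in lowest terms. If $R_i$ had a pole at some $\alpha\in\C^*$, then, since $m_\ell$ attains $\alpha$ on $\mathbb{H}$, the function $a_i$ would have a pole on $\mathbb{H}$, contradicting $a_i\in\OO$. Hence the only possible poles of $R_i$ are at $0$ and $\infty$, so $R_i\in\C[X,X^{-1}]$ and $a_i=\tilde P_i(m_\ell)/m_\ell^{k_i}$ for a polynomial $\tilde P_i$ and some $k_i\ge 0$. Setting $k=\max_i k_i$ and
\[
Q(X,Y)=\sum_{i=0}^{v-1}\tilde P_i(X)\,X^{k-k_i}\,Y^i\in\C[X,Y],
\]
which satisfies $\deg2_Y(Q)<v$, and substituting back gives
\[
g(\tau)=\frac{Q(m_\ell(\tau), j(\tau))}{m_\ell(\tau)^k\,\frac{\partial M_\ell}{\partial Y}(m_\ell(\tau), j(\tau))},
\]
as claimed.

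I expect the main obstacle to be the identification $\OO=\C[m_\ell, m_\ell^{-1}]$, that is, the value-distribution statement for $m_\ell$: one must argue carefully that the only values $m_\ell$ omits on $\mathbb{H}$ are $0$ and $\infty$, both attained at cusps. This rests on the nonvanishing of the eta-quotient on $\mathbb{H}$ together with the fact that $\Gamma_0(\ell)$ has exactly the two cusps $0,\infty$, so that the divisor of $m_\ell$ is cusp-supported. Once this is in place, the passage to a common denominator and the degree bound $\deg2_Y(Q)<v$ are routine bookkeeping.
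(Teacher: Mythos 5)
Your proof is correct, and it reaches the same reduction as the paper --- apply corollary \ref{koro:darst_besser_allgemein} with $f=m_\ell$ and identify the ring of holomorphic elements of $\C(m_\ell(\tau))$ as $\C[m_\ell(\tau),m_\ell(\tau)^{-1}]$, for which the decisive input is $m_\ell(\mathbb{H})=\C^*$ --- but you prove that key value-distribution fact by a genuinely different route. The paper argues algebraically with $q$-expansions: it applies the Fricke--Atkin--Lehner involution $w_\ell$ of definition \ref{def:def_wl} to $M_\ell(m_\ell(\tau),j(\tau))=0$, uses the orders $\ord(f_i)=-v$ ($0\leq i<\ell$), $\ord(f_\ell)=\ell v$ of the roots of $M_\ell(X,j(\ell\tau))$ quoted from M\"uller together with $\ord(j(\ell\tau))=-\ell$ to conclude that the coefficient of $Y^v$ in $M_\ell(X,Y)$ is $a_{1,v}X$ with $a_{1,v}\neq 0$, so that $M_\ell(c,Y)$ has degree $v\geq 1$ in $Y$ for every $c\in\C^*$; surjectivity of $j$ and the product form of $M_\ell(X,j(\tau))$ over the representatives $S_k$ from lemma \ref{lem:mod_funk_mi_po} then exhibit $c$ as a value $m_\ell(S_k\tau)$. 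You instead invoke Riemann-surface theory: $m_\ell$ is a nowhere-vanishing holomorphic eta-quotient on $\mathbb{H}$, so its divisor on the compactification of $\mathbb{H}/\Gamma_0(\ell)$ is supported on the two cusps $0,\infty$, and a nonconstant holomorphic map to $\mathbb{P}^1(\C)$ from a compact Riemann surface is surjective, whence every $\alpha\in\C^*$ is attained on $\mathbb{H}$. Your argument is shorter and more conceptual, but it leans on heavier standard machinery (the compact Riemann surface structure of $X_0(\ell)$, the fact that prime level gives exactly two cusps, and --- worth one explicit line --- that the pole forced by nonconstancy must sit at the cusp $0$ since it cannot lie on $\mathbb{H}$ or at $\infty$); the paper's computation stays within the elementary toolkit it has already set up and yields as a by-product the structural fact $a_{1,v}\neq 0$, $a_{i,v}=0$ for $i\neq 1$ about $M_\ell$, which is of the same flavour as the properties of $M_\ell$ exploited later in remark \ref{bem:elkies_m_nenner_nicht_null}. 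Your subsequent bookkeeping --- writing $a_i=R_i(m_\ell)$ in lowest terms, excluding poles of $R_i$ in $\C^*$, and clearing $m_\ell^{k}$ with $k=\max_i k_i$ to get $\deg2_Y(Q)<v$ --- is exactly the implication the paper leaves implicit, and it is sound.
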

\begin{proof}
We apply corollary \ref{koro:darst_besser_allgemein}, setting $f(\tau)=m_\ell(\tau)$. As mentioned, $m_\ell(\tau)$ is holomorphic, and this also holds for $m_\ell(\tau)^{-1}$. We now show $m_\ell(\mathbb{H})=\C^*$, which implies the holomorphic functions in $\C(m_\ell(\tau))$ are given by $\OO=\C[m_\ell(\tau), m_\ell(\tau)^{-1}]$. The assertion then follows using corollary \ref{koro:darst_besser_allgemein}.\\

By definition, $M_\ell(m_\ell(\tau), j(\tau))=0$ holds. Applying the Fricke-Atkin-Lehner involution $w_\ell$ from definition \ref{def:def_wl} to this equation we obtain that $w_\ell(m_\ell(\tau))$ is a root of $M_\ell(X, j(\ell\tau))$. Writing $M_\ell(X, Y)=\sum_{i=0}^{\ell+1}\sum_{k=0}^v a_{i, k}X^iY^k$ yields
\begin{equation}\label{eq:m_l_mipo_ell_sym}
\sum_{i=0}^{\ell+1}X^i\sum_{k=0}^v a_{i, k}j(\ell\tau)^k=M_\ell(X, j(\ell\tau))=\sum_{i=0}^{\ell+1}s_{\ell+1-i}(\tau)X^i,
 \end{equation}
where $s_{\ell+1-i}(\tau)$ are the elementary-symmetric polynomials in the roots $w_\ell(m_\ell(\tau))=f_0(\tau), \ldots,$ $f_\ell(\tau)$ of $M_\ell(X, j(\ell\tau))$. In \cite[p.~63]{Mueller} it is shown the Laurent series of these functions have the orders
\[ 
\ord(f_i)=-v,\ 0\leq i<\ell, \quad \ord(f_\ell)=\ell v,
 \]
from which we conclude
\[ 
\ord(s_{\ell+1-i})=-(\ell+1-i)v,\ 1\leq i\leq\ell+1, \quad \ord(s_{\ell+1})=0.
 \]
Using $\ord(j(\ell\tau))=-\ell$ equation \eqref{eq:m_l_mipo_ell_sym} now implies $a_{1, v}\neq 0$ and $a_{i, v}=0$ for $i\neq 1$. So $M_\ell(c, Y)$ is a polynomial of degree $v$ in $Y$ if $c \in \C^*$. Due to the surjectivity of $j$ there exists $\tau \in \mathbb{H}$ such that $M_\ell(c, j(\tau))=0$. Thus, for one of the transformations $S_k$ introduced in lemma \ref{lem:mod_funk_mi_po} the identity $c=m_\ell(S_k\tau)$ holds. Hence, $m_\ell$ attains all values $c \in \C^*$.
\end{proof}


\subsection{The Tate curve}\label{sec:tate_kurve}
In this section we will define the universal elliptic Gau{\ss} sums before considering applications in section \ref{sec:abschnitt_elkies}. We first recall the Weierstra{\ss} $\wp$-function.
\begin{defi}\cite[p.~200]{Cox}\label{def:wp_def}
Let $\Lambda=\langle \omega_1, \omega_2\rangle_\Z \subset \C$ be the lattice generated by $\omega_1, \omega_2$. The \textit{Weierstra{\ss} $\wp$-function} associated to $\Lambda$ is defined via
\[ 
\wp(z, \Lambda)=\frac{1}{z^2}+\sum_{\substack{\omega \in \Lambda\\ \omega\neq 0}} \left( \frac{1}{(z-\omega)^2}-\frac{1}{\omega^2} \right).
 \]
For the special lattices $\langle 1, \tau \rangle_\Z$ with $\tau \in \mathbb{H}$ we write
\[ 
\wp(z, \tau)=\wp(z, \langle 1, \tau \rangle_\Z)=\frac{1}{z^2}+\sum_{n^2+m^2\neq 0} \left( \frac{1}{(z-(m+n\tau))^2}-\frac{1}{(m+n\tau)^2} \right).
\]
Its derivative is
\[ 
\wp^{\prime}(z, \tau)=-\frac{2}{z^3}-2\sum_{n^2+m^2\neq 0} \frac{1}{(z-(m+n\tau))^3}.
\]
\end{defi}

\begin{satz}\cite[pp.~159--161]{Silverman}\label{satz:iso_kurve_gitter}
Let $E/\C$ be an elliptic curve. Then there exist $\tau \in \mathbb{H}$,  $\alpha \in \C$ such that setting $\Lambda=\langle 1, \tau \rangle_\Z$  there is a complex-analytic isomorphism
\[ 
\psi_1: \C/\Lambda \rightarrow E(\C), \quad z \mapsto \begin{cases}
(\wp(\alpha z, \alpha\Lambda), \wp^{\prime}(\alpha z,\alpha\Lambda)),\quad & z \notin \Lambda,\\
\OO,\quad & z \in \Lambda.
\end{cases}
 \]
\end{satz}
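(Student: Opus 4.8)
The plan is to prove the theorem in two logically separate parts: first, that \emph{every} lattice yields, via $\wp$ and $\wp'$, a complex-analytic group isomorphism onto an associated Weierstra{\ss} curve, and second, that the elliptic curve $E$ we are given actually arises from such a lattice after a suitable homothety $\alpha$. The scaling factor $\alpha$ is precisely what allows us to restrict attention to normalised lattices $\langle 1,\tau\rangle$: any lattice $\Lambda'=\langle\omega_1,\omega_2\rangle_\Z$ can be written as $\omega_1\langle 1,\tau\rangle$ with $\tau=\omega_2/\omega_1$ (after possibly swapping the generators so that $\Im(\tau)>0$), so it suffices to produce \emph{some} lattice whose curve is isomorphic to $E$ and then absorb the homothety into $\alpha$.

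For the first part I would fix a lattice $\Lambda$ and form $g_2(\Lambda)=60\sum_{\omega\neq 0}\omega^{-4}$ and $g_3(\Lambda)=140\sum_{\omega\neq 0}\omega^{-6}$. The key analytic facts are that $\wp(\cdot,\Lambda)$ is an even, $\Lambda$-periodic meromorphic function with a double pole exactly at the lattice points, that $\wp'$ is odd and $\Lambda$-periodic with a triple pole there, and that they satisfy the differential relation $(\wp')^2=4\wp^3-g_2\wp-g_3$. The last identity is obtained by comparing Laurent expansions at $0$: the difference of the two sides is an entire elliptic function vanishing at $0$, hence constant, hence $0$, by Liouville's theorem for elliptic functions. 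This shows the image of $z\mapsto(\wp(z,\Lambda),\wp'(z,\Lambda))$ lies on $E_\Lambda\colon Y^2=4X^3-g_2X-g_3$. Bijectivity follows from counting: an elliptic function of order $2$ such as $\wp(z)-c$ takes each value $c$ exactly twice (with multiplicity) in a fundamental domain, and the sign of $\wp'$ distinguishes the two preimages; that the map is a group homomorphism follows from the addition theorem for $\wp$, which in turn comes from comparing poles and residues of the elliptic functions built from either side of the addition formula.

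For the second (existence) part I would invoke the surjectivity of the $j$-invariant, already recorded in the excerpt just before Theorem~\ref{satz:j_erzeugt_mod_funk}. After an admissible change of variables (here simply rescaling $Y$, possible since $\car\C=0$) I bring $E$ into the shape $Y^2=4X^3-AX-B$ with $A^3-27B^2\neq 0$ and invariant $j(E)=1728\,A^3/(A^3-27B^2)$, and pick $\tau\in\mathbb{H}$ with $j(\tau)=j(E)$; the curve attached to $\langle 1,\tau\rangle$ then has the same $j$-invariant as $E$. It remains to match the coefficients exactly rather than merely up to isomorphism: under homothety the weights transform as $g_2(\alpha\Lambda)=\alpha^{-4}g_2(\Lambda)$ and $g_3(\alpha\Lambda)=\alpha^{-6}g_3(\Lambda)$, while $\wp(\alpha z,\alpha\Lambda)=\alpha^{-2}\wp(z,\Lambda)$ and $\wp'(\alpha z,\alpha\Lambda)=\alpha^{-3}\wp'(z,\Lambda)$, so one can choose $\alpha$ with $\alpha^{-4}g_2(\langle 1,\tau\rangle)=A$ and $\alpha^{-6}g_3(\langle 1,\tau\rangle)=B$ simultaneously, the solvability being exactly the equality of $j$-invariants (the degenerate cases $A=0$ and $B=0$, i.e. $j=0$ and $j=1728$, are checked directly). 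The composite $z\mapsto(\wp(\alpha z,\alpha\Lambda),\wp'(\alpha z,\alpha\Lambda))$ is then the desired $\psi_1$.

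I expect the main obstacle to be the first part rather than the second: once surjectivity of $j$ is granted, the existence argument is essentially bookkeeping with the homogeneity weights, whereas establishing that $\wp$ and $\wp'$ give a \emph{bijective} group isomorphism rests on the full machinery of elliptic functions, namely Liouville's theorems, the valence count showing that a nonconstant elliptic function of order $n$ is $n$-to-one, and the addition theorem for $\wp$. In a self-contained write-up these are the steps requiring genuine work; here they may be quoted from \cite{Silverman}, so that the proof reduces to assembling them together with the normalisation by $\alpha$.
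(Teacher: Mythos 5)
The paper gives no proof of Theorem \ref{satz:iso_kurve_gitter}: it is quoted verbatim from \cite[pp.~159--161]{Silverman}, so there is no internal argument to compare against, and your task is really to reconstruct the classical uniformization proof — which your outline does, essentially along the lines of the cited reference, and correctly. The decomposition into (i) ``every lattice gives an analytic group isomorphism $\C/\Lambda \to E_\Lambda$'' via the relation $(\wp')^2=4\wp^3-g_2\wp-g_3$ (Laurent comparison plus Liouville), the valence count, and the addition theorem, and (ii) ``every $E/\C$ arises from some lattice'' via surjectivity of $j$ (which the paper records just before Theorem \ref{satz:j_erzeugt_mod_funk}) together with the homogeneity $g_2(\alpha\Lambda)=\alpha^{-4}g_2(\Lambda)$, $g_3(\alpha\Lambda)=\alpha^{-6}g_3(\Lambda)$, is exactly the standard and correct division of labour, and absorbing the homothety into $\alpha$ so that only lattices $\langle 1,\tau\rangle_\Z$ appear matches the precise formulation of the theorem. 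Two details deserve tightening in a full write-up. First, over $\C$ the ``sign of $\wp'$'' should be its \emph{value}: $\wp'(-z)=-\wp'(z)$ separates the two preimages $\pm z$ of $c=\wp(z)$ except at the $2$-torsion points where $\wp'(z)=0$; there $z\equiv -z \pmod{\Lambda}$ and the order-$2$ valence count shows $\wp-c$ has a double root, so injectivity survives. Second, when matching coefficients, $\alpha^{-4}g_2(\langle 1,\tau\rangle)=A$ only determines $\alpha^4$, after which $\alpha^{-6}g_3(\langle 1,\tau\rangle)=\pm B$; equality of $j$-invariants guarantees agreement up to this sign, and the sign is repaired by replacing $\alpha$ with $i\alpha$ (which fixes $\alpha^4$ and flips $\alpha^6$) — this small twist is worth making explicit, as is your rescaling of $Y$ to reach the form $Y^2=4X^3-AX-B$, which correctly accounts for the factor $4$ that the pair $(\wp,\wp')$ forces and which the bare statement of the theorem glosses over.
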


The following series expansions of the Weierstra{\ss} $\wp$-function will be used several times:
\begin{lemma}\cite[p.~50]{Silverman_advanced}\label{lem:x_y_def}
Let $q=e^{2\pi i \tau}$, $w=e^{2\pi i z}$. Then for $|q|<|w|<|q^{-1}|$ the following equations hold:
\begin{align}
\frac{1}{(2\pi i)^2}\wp(z, \tau)&=\frac{1}{12}-2\sum_{n=1}^{\infty}\frac{q^n}{(1-q^n)^2} +\sum_{n \in \Z} \frac{q^n w}{(1-q^n w)^2}=:x(w, q), \label{eq:x_def_alt}\\
\frac{1}{(2\pi i)^3 }\wp^{\prime}(z, \tau)&=\sum_{n \in \Z}\frac{q^n w(1+q^n w)}{(1-q^n w)^3}=:2y(w, q).
\end{align}
\end{lemma}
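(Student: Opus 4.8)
The plan is to reduce both formulas to the classical partial-fraction expansion of $\csc^2$ and then rewrite the resulting trigonometric expressions in the variables $q,w$. Concretely, I would start from the identity
\[
\sum_{m \in \Z} \frac{1}{(\theta-m)^2} = \frac{\pi^2}{\sin^2(\pi\theta)},
\]
valid for $\theta \notin \Z$, which I would recall as the term-by-term derivative of the Mittag-Leffler expansion of $\pi\cot(\pi\theta)$. Writing $\sin(\pi\theta) = (e^{\pi i\theta}-e^{-\pi i\theta})/(2i)$ and setting $v = e^{2\pi i\theta}$ gives the purely algebraic identity $\sin^2(\pi\theta) = -(1-v)^2/(4v)$, hence
\[
\frac{1}{(2\pi i)^2}\,\frac{\pi^2}{\sin^2(\pi\theta)} = \frac{v}{(1-v)^2}.
\]
This single computational fact underlies everything below.

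Next I would exploit absolute convergence to regroup the lattice sum: after subtracting the $1/\omega^2$ terms the series defining $\wp$ converges absolutely, so I may sum over the lines $\omega = m + n\tau$ with $n$ fixed. The line $n=0$, combined with $1/z^2$ and the constant $\sum_{m\neq 0}m^{-2} = \pi^2/3$, contributes $\pi^2/\sin^2(\pi z) - \pi^2/3$; dividing by $(2\pi i)^2$ and applying the displayed identity with $\theta = z$ (so $v=w$) yields $\frac{w}{(1-w)^2} + \frac{1}{12}$. For each fixed $n \neq 0$ the inner sum over $m$ splits into $\sum_m (z-m-n\tau)^{-2} - \sum_m (m+n\tau)^{-2}$, which by the same identity equals $\pi^2/\sin^2(\pi(z-n\tau)) - \pi^2/\sin^2(\pi n\tau)$. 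Applying the displayed identity with $v = wq^{-n}$ and $v = q^n$ respectively, and noting that $q^{-n}/(1-q^{-n})^2 = q^n/(1-q^n)^2$ so that the correction terms for $\pm n$ coincide, the $n\neq 0$ contribution becomes $\sum_{n\neq 0}\frac{wq^{-n}}{(1-wq^{-n})^2} - 2\sum_{n\geq 1}\frac{q^n}{(1-q^n)^2}$. Re-indexing $n \mapsto -n$ in the first sum and reinstating the $n=0$ term merges it into $\sum_{n\in\Z}\frac{q^n w}{(1-q^n w)^2}$, which is exactly $x(w,q)$.

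For the derivative I would differentiate the resulting series termwise, legitimate because it converges uniformly on compact subsets of the annulus. Using $\frac{d}{dz} = 2\pi i\,w\frac{d}{dw}$, the constant and the $q$-only part drop out, while an elementary computation with $t=q^n w$ gives $w\frac{d}{dw}\frac{q^n w}{(1-q^n w)^2} = \frac{q^n w(1+q^n w)}{(1-q^n w)^3}$. Summing over $n$ and dividing by $(2\pi i)^3$ produces $2y(w,q)$, as claimed.

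The role of the hypothesis $|q|<|w|<|q|^{-1}$ is exactly to guarantee that $z$ avoids the lattice (no denominator $1-q^n w$ vanishes) and that the bilateral series converges: as $n\to+\infty$ the terms decay like $q^n w$, and as $n\to-\infty$ like $q^{-n}/w$. I expect the main technical care to lie not in any single computation but in the regrouping step — justifying the interchange of the double lattice sum into the iterated sum over lines, and tracking the $\pm n$ symmetry that lets the naturally occurring $q^{-n}$ be re-indexed to the stated $q^n$. Everything else is the single $\csc^2$ identity applied repeatedly.
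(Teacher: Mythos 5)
Your proposal is correct and follows essentially the same route as the proof the paper cites (Silverman, \emph{Advanced Topics}, Thm.~I.6.2): regroup the absolutely convergent lattice sum along the lines $\omega=m+n\tau$ with $n$ fixed, convert each line via $\sum_{m\in\Z}(\theta-m)^{-2}=\pi^2/\sin^2(\pi\theta)=(2\pi i)^2 v/(1-v)^2$ with $v=e^{2\pi i\theta}$, exploit the $n\mapsto -n$ symmetry of the correction terms, and obtain the $\wp'$-expansion by termwise differentiation using $\frac{d}{dz}=2\pi i\,w\frac{d}{dw}$. All the computational details you give (the constant $\pi^2/3$ producing $\tfrac1{12}$, the re-indexing $wq^{-n}\mapsto wq^{n}$, and $w\frac{d}{dw}\frac{t}{(1-t)^2}=\frac{t(1+t)}{(1-t)^3}$ for $t=q^nw$) check out.
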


\begin{bem}
Using the equation for the geometric series and its derivatives the series expansions for $x(w,q)$ and $y(w, q)$ may be transformed, which is useful both for proofs and for actual computations. In this way for $|q|<|w|<|q^{-1}|$ we obtain the formulae
\begin{align}
x(w,q)&=\frac{1}{12}+\frac{w}{(1-w)^2}+\sum_{n=1}^{\infty}\sum_{m=1}^{\infty}mq^{nm}(w^m+w^{-m})
-2mq^{nm}, \label{eq:x_def}\\
y(w, q)&=\frac{w+w^2}{2(1-w)^3}+\frac{1}{2}\sum_{n=1}^{\infty}\sum_{m=1}^{\infty}\frac{m(m+1)}{2}
\left(q^{nm}(w^m-w^{-m})+q^{n(m+1)}(w^{m+1}-w^{-(m+1)}) \right). \label{eq:y_def}
\end{align}
In the cases we consider $w$ is a root of unity. Hence all series expansions can be studied in this form since $|q(\tau)|<1$ for $\tau \in \mathbb{H}$.
\end{bem}

\begin{prop}\cite[p.~245]{Schoof}
Let $E_4(q), E_6(q)$ be the Eisenstein series defined in \eqref{eq:e4_lr} and \eqref{eq:e6_lr} and $E_2(q)$ as in \eqref{eq:e2_lr}. Then the following equations hold:
\begin{align}
y(w, q)^2&=x(w, q)^3-\frac{E_4(q)}{48}x(w, q)+\frac{E_6(q)}{864}\label{eq:tate_kurve_def},  \\ 
\sum_{\zeta \in \mu_\ell, \zeta\neq 1}x(\zeta, q)&=\frac{1}{12}\ell(E_2(q)-\ell E_2(q^\ell))=p_1(q)\label{eq:p1_def}.
\end{align}
\end{prop}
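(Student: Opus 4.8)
The statement splits into the two independent identities \eqref{eq:tate_kurve_def} and \eqref{eq:p1_def}, and I would treat them separately.

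For \eqref{eq:tate_kurve_def} the plan is to reduce everything to the classical Weierstra{\ss} differential equation. For the lattice $\langle 1, \tau\rangle_\Z$ the function $\wp(z,\tau)$ satisfies $\wp'(z,\tau)^2 = 4\wp(z,\tau)^3 - g_2\wp(z,\tau) - g_3$, where $g_2 = 60\sum_{\omega\neq 0}\omega^{-4}$ and $g_3 = 140\sum_{\omega\neq 0}\omega^{-6}$. Writing these Eisenstein sums in terms of the normalised series from \eqref{eq:e4_lr} and \eqref{eq:e6_lr} via $G_{2k} = 2\zeta(2k)E_{2k}$ together with $\zeta(4)=\pi^4/90$ and $\zeta(6)=\pi^6/945$ gives $g_2 = \tfrac{4}{3}\pi^4 E_4$ and $g_3 = \tfrac{8}{27}\pi^6 E_6$. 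I would then substitute the defining relations of lemma \ref{lem:x_y_def}, namely $\wp = (2\pi i)^2 x$ and $\wp' = 2(2\pi i)^3 y$, into the differential equation and divide through by the common factor $4(2\pi i)^6 = -256\pi^6$. The powers of $2\pi i$ cancel uniformly and the remaining numerical constants collapse to $\tfrac{1}{48}$ and $\tfrac{1}{864}$, which is exactly \eqref{eq:tate_kurve_def}. The only care needed here is the bookkeeping of the factors of $2\pi i$ and of the rational constants; there is no conceptual obstacle.

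For \eqref{eq:p1_def} I would start from the already transformed expansion \eqref{eq:x_def}, which writes $x(w,q)$ as $\tfrac{1}{12} + \tfrac{w}{(1-w)^2} + \sum_{n,m\geq 1}\bigl(m q^{nm}(w^m + w^{-m}) - 2m q^{nm}\bigr)$, and sum over $w=\zeta$ ranging over the nontrivial $\ell$-th roots of unity. The summation separates into three pieces. The constant term contributes $(\ell-1)/12$. For the middle double sum I would invoke the orthogonality relation $\sum_{\zeta\in\mu_\ell}\zeta^m = \ell$ if $\ell\mid m$ and $0$ otherwise, so that $\sum_{\zeta\neq 1}(\zeta^m+\zeta^{-m}) = 2(\ell\,[\ell\mid m] - 1)$; splitting off the terms with $\ell\mid m$ and substituting $m=\ell m'$ reindexes the corresponding series into the same shape but in $q^\ell$. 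The remaining ingredient is the finite sum $\sum_{\zeta\neq 1}\zeta/(1-\zeta)^2$, which I would evaluate by writing $\zeta/(1-\zeta)^2 = -1/(1-\zeta) + 1/(1-\zeta)^2$ and computing both sums as the first and second logarithmic derivatives of $\prod_{\zeta\neq 1}(X-\zeta) = (X^\ell-1)/(X-1) = 1 + X + \cdots + X^{\ell-1}$ at $X=1$; this yields $-(\ell^2-1)/12$.

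Collecting the pieces, the purely numerical part becomes $-\ell(\ell-1)/12$ and the $q$-dependent part becomes $2\ell^2 A(q^\ell) - 2\ell A(q)$, where $A(q) = \sum_{n,m\geq 1} m q^{nm} = \sum_{d\geq 1}\sigma_1(d)q^d$. Comparing $q$-expansions shows $A(q) = (1-E_2(q))/24$ from the definition \eqref{eq:e2_lr} of $E_2$, and substituting this together with the analogous identity in $q^\ell$ makes the $\ell^2$ and $\ell$ constant contributions cancel, leaving precisely $\tfrac{1}{12}\ell(E_2(q)-\ell E_2(q^\ell)) = p_1(q)$. I expect the genuine difficulty to lie in this second identity rather than the first: one must justify rearranging the conditionally convergent double series when $w$ sits on the unit circle, and the evaluation of the boundary term $\sum_{\zeta\neq 1}\zeta/(1-\zeta)^2$ is the one step that is neither purely formal nor a direct appeal to an earlier result.
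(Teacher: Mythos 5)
Your proof is correct; note that the paper does not prove this proposition at all --- it is quoted from Schoof's article \cite[p.~245]{Schoof} --- so your argument supplies exactly the computation the citation stands in for, and it is the standard one. Both halves check out: for \eqref{eq:tate_kurve_def}, the normalisations $g_2=\frac{4}{3}\pi^4E_4$ and $g_3=\frac{8}{27}\pi^6E_6$ (via $\zeta(4)=\pi^4/90$, $\zeta(6)=\pi^6/945$) together with the substitution $\wp=(2\pi i)^2x$, $\wp^{\prime}=2(2\pi i)^3y$ in $\wp^{\prime 2}=4\wp^3-g_2\wp-g_3$ and division by $4(2\pi i)^6=-256\pi^6$ do produce the constants $\frac{1}{48}$ and $\frac{1}{864}$; for \eqref{eq:p1_def}, with $P(X)=1+X+\cdots+X^{\ell-1}$ one gets $P(1)=\ell$, $P^{\prime}(1)=\ell(\ell-1)/2$, $P^{\prime\prime}(1)=\ell(\ell-1)(\ell-2)/3$, hence $\sum_{\zeta\neq1}(1-\zeta)^{-1}=(\ell-1)/2$ and $\sum_{\zeta\neq1}(1-\zeta)^{-2}=(\ell-1)(5-\ell)/12$, confirming your boundary value $-(\ell^2-1)/12$; the remaining pieces $(\ell-1)/12-(\ell^2-1)/12+2\ell^2A(q^\ell)-2\ell A(q)$ with $A(q)=(1-E_2(q))/24$ indeed collapse to $\frac{1}{12}\ell\left(E_2(q)-\ell E_2(q^\ell)\right)=p_1(q)$. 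One small correction: the convergence worry you flag at the end is unfounded. For fixed $|q|<1$ and $|w|=1$, $w\neq 1$, the double series in \eqref{eq:x_def} is dominated termwise by $4m|q|^{nm}$ and $\sum_{n,m\geq1}m|q|^{nm}<\infty$, so the series is absolutely convergent and interchanging it with the finite sum over $\zeta\in\mu_\ell\setminus\{1\}$ requires no further justification; the only genuinely non-formal step is the evaluation of $\sum_{\zeta\neq1}\zeta/(1-\zeta)^2$, which you handle correctly.
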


Equation \eqref{eq:tate_kurve_def} defines the so-called Tate curve $E_q$ introduced in \cite{Tate}. Among its properties are the following ones:

\begin{satz}\cite[p.~410--411]{Silverman_advanced}\label{satz:tate_kurve_eig}
\begin{enumerate}
\item The Tate curve $E_q$ is an elliptic curve and
\[ 
\Delta(E_q)=\Delta(q),\quad j(E_q)=j(q)
 \]
holds, where $\Delta(q)$, $j(q)$ are the corresponding modular functions from equations \eqref{eq:delta_lr} and \eqref{eq:j_lr}.
\item There is a complex-analytic isomorphism
\[ 
\psi_2: \C^{\ast}/q^{\Z} \rightarrow E_q(\C),\quad 
w \mapsto \begin{cases}
(x(w, q), y(w, q)),\quad & w \notin q^{\Z},\\
\OO,\quad & w \in q^{\Z}.
\end{cases}
 \]
\item For every elliptic curve $E/\C$ there exists $q \in \C^*$ with $|q|<1$, such that 
\[ 
E_q\cong E(\C)
 \]
holds. This $q$ is given by $q=q(\tau)=\exp(2\pi i \tau)$ for the $\tau$ from theorem \ref{satz:iso_kurve_gitter}. As in theorem \ref{satz:iso_kurve_gitter} we write $\Lambda=\langle 1, \tau\rangle_\Z$. The isomorphism $\psi: E_q \rightarrow E(\C)$ satisfies $\psi=\psi_1 \circ \theta^{-1} \circ \psi_2^{-1}$, where $\psi_1$ is from theorem \ref{satz:iso_kurve_gitter} and
\[ 
\theta: \C/\Lambda\ \tilde{\rightarrow}\ \C^\ast/q^{\Z}, \quad z \mapsto w=\exp(2 \pi iz).
 \]
Hence, $\psi$ is defined via
\[ 
(x(w, q), y(w, q)) \mapsto (\wp(\alpha z, \alpha\Lambda), \wp^{\prime}(\alpha z, \alpha\Lambda)), \quad \OO \mapsto \OO,
 \]
where $\alpha$ is as in theorem \ref{satz:iso_kurve_gitter}.
\end{enumerate}
\end{satz}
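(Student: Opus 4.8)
This is Tate's theorem, and the plan is to derive each of its three parts from the material already assembled rather than to reprove the uniformisation from scratch.

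\textbf{Part (1).} I would argue by a direct identification of invariants. Writing the Tate equation \eqref{eq:tate_kurve_def} in the shape $y^2=x^3+Ax+B$ with $A=-E_4(q)/48$ and $B=E_6(q)/864$, I form the usual quantities $c_4=-48A=E_4(q)$ and $c_6=-864B=-E_6(q)$. The standard relations $1728\,\Delta=c_4^3-c_6^2$ and $j=c_4^3/\Delta$ then give
\[
\Delta(E_q)=\frac{E_4(q)^3-E_6(q)^2}{1728}=\Delta(q),\qquad j(E_q)=\frac{E_4(q)^3}{\Delta(q)}=j(q),
\]
which are precisely \eqref{eq:delta_lr} and \eqref{eq:j_lr}. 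To confirm that $E_q$ is genuinely an elliptic curve I would invoke the nonvanishing $\Delta(q)=q\prod_{n\geq1}(1-q^n)^{24}\neq 0$ for $0<|q|<1$, so the defining cubic is nonsingular.

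\textbf{Part (2).} The strategy is to transport the classical Weierstra{\ss} uniformisation through the exponential. The map $z\mapsto w=\exp(2\pi i z)$ induces an isomorphism of complex Lie groups $\C/\Lambda \xrightarrow{\sim} \C^{\ast}/q^{\Z}$, since $\Lambda=\langle 1,\tau\rangle_\Z$ is carried exactly onto $q^{\Z}$ (indeed $\exp(2\pi i(m+n\tau))=q^n$, and conversely $\exp(2\pi i z)\in q^{\Z}$ forces $z\in\Lambda$). Composing its inverse with the $\wp$-map of theorem \ref{satz:iso_kurve_gitter} and a rescaling of coordinates by powers of $2\pi i$, Lemma \ref{lem:x_y_def} identifies the composite with $w\mapsto(x(w,q),y(w,q))$, since $x(w,q)=(2\pi i)^{-2}\wp(z,\tau)$ and $y(w,q)=\tfrac12(2\pi i)^{-3}\wp'(z,\tau)$. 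Well-definedness under $w\mapsto qw$ is immediate from the $\Z$-indexed sums in \eqref{eq:x_def_alt} (equivalently, from the invariance $z\mapsto z+\tau$), while bijectivity and holomorphy are inherited from the two isomorphisms being composed. As both maps respect the group laws, $\psi_2$ is a complex-analytic isomorphism.

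\textbf{Part (3).} Given $E/\C$, I would apply theorem \ref{satz:iso_kurve_gitter} to obtain $\tau\in\mathbb{H}$ with $E(\C)\cong\C/\langle 1,\tau\rangle_\Z$, and set $q=\exp(2\pi i\tau)$, which satisfies $|q|<1$ because $\Im(\tau)>0$. Part (2) then yields the chain $E_q(\C)\cong\C^{\ast}/q^{\Z}\cong\C/\langle 1,\tau\rangle_\Z\cong E(\C)$, and unwinding it produces the stated isomorphism $\psi=\psi_1\circ\theta^{-1}\circ\psi_2^{-1}$. The technical heart of the whole statement is Part (2): the only real content is checking that $\psi_2$ is a well-defined bijective group isomorphism. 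The decisive lever is Lemma \ref{lem:x_y_def}, which already expresses $\wp$ and $\wp'$ as the very $q$-series defining $x$ and $y$; with it in hand the remaining work reduces to careful bookkeeping of the scalings by $(2\pi i)^2$ and $(2\pi i)^3$ and to verifying that $\exp$ maps $\Lambda$ onto $q^{\Z}$ with kernel exactly $\Lambda$.
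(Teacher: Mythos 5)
The paper offers no proof of this theorem at all: it is quoted from \cite{Silverman_advanced} (pp.~410--411), so there is no internal argument to compare yours against, and your sketch must be judged on its own merits. On those merits it is correct and follows the classical route. In part (1), with $A=-E_4(q)/48$ and $B=E_6(q)/864$ you indeed get $c_4=-48A=E_4(q)$ and $c_6=-864B=-E_6(q)$, hence $\Delta(E_q)=(E_4^3-E_6^2)/1728$ and $j(E_q)=E_4^3/\Delta$, matching \eqref{eq:delta_lr} and \eqref{eq:j_lr}; invoking the product formula $\Delta(q)=q\prod_{n\geq 1}(1-q^n)^{24}\neq 0$ for $0<|q|<1$ to get nonsingularity is legitimate, though that formula is itself a classical fact not established in the paper. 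In part (2), your computation of the kernel of $z\mapsto \exp(2\pi i z)$ modulo $q^{\Z}$ is correct, and lemma \ref{lem:x_y_def} does identify the transported Weierstra{\ss} map with $w\mapsto(x(w,q),y(w,q))$; the one step you leave implicit and should make explicit is that the rescaling by powers of $2\pi i$ carries the model $Y^2=4X^3-g_2X-g_3$ \emph{isomorphically onto the Tate model}, i.e.\ that the rescaled point actually satisfies \eqref{eq:tate_kurve_def}. This amounts to the identities $g_2(\tau)=\tfrac{4\pi^4}{3}E_4(\tau)$ and $g_3(\tau)=\tfrac{8\pi^6}{27}E_6(\tau)$, and is exactly what the proposition quoted from Schoof (equation \eqref{eq:tate_kurve_def}) already supplies, so you can simply cite it rather than recompute the constants. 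Part (3) is then the formal chain $E_q(\C)\cong\C^{\ast}/q^{\Z}\cong\C/\Lambda\cong E(\C)$ you describe, with $|q|<1$ from $\Im(\tau)>0$, and it reproduces $\psi=\psi_1\circ\theta^{-1}\circ\psi_2^{-1}$ exactly as stated; your closing observation that part (2) carries all the real content, the rest being bookkeeping of the $(2\pi i)^2$ and $(2\pi i)^3$ scalings and of $\theta$, is a fair summary of why the paper is content to cite the result.
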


As follows from the last statement, the Tate curve parametrises isomorphism classes of elliptic curves over $\C$. This is the main idea of the applications presented in section \ref{sec:abschnitt_elkies}. One can compute the objects in question, the elliptic Gau{\ss} sums from equation \eqref{eq:ell_gs}, as formal power series in $q$ by means of the Tate curve. Proposition \ref{prop:darstellung_besser} and corollary \ref{koro:gs_modulfunktion_2} show that the resulting power series admit a representation as a rational expression in terms of $j(q)$ and $m_\ell(q)$. These formulae may then be specialised to a concrete elliptic curve $E\cong E_{q(\tau)}$ over $\C$  (or over $\F_p$ after reduction) by replacing the formal variable $q$ by $q(\tau)$ as detailed in section \ref{sec:abschnitt_elkies}.\\

We now study the behaviour of $\wp(z, \tau)$ and $\wp^{\prime}(z, \tau)$ under transformations from $\Gamma$. Using lemma \ref{lem:x_y_def} this will yield results on the behaviour of $x(w, q)$ as well as $y(w, q)$. First, we derive the following
\begin{lemma}\label{lem:wp_funktion_trafo}
Let $\tau \in \mathbb{H}$ and $\gamma=\begin{pmatrix}
a & b \\ c & d
\end{pmatrix}
\in \Gamma$. Then
\begin{align}
\wp(z, \gamma\tau)&=(c\tau+d)^2\wp((c\tau+d)z, \tau),\\
\wp^{\prime}(z, \gamma\tau)&=(c\tau+d)^3\wp^{\prime}((c\tau+d)z, \tau)
\end{align}
hold.
\end{lemma}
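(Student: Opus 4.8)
The plan is to reduce the transformation behaviour under $\gamma$ to the elementary homogeneity property of the Weierstra{\ss} $\wp$-function, combined with the observation that passing from $\tau$ to $\gamma\tau$ merely rescales the lattice $\langle 1, \tau\rangle_\Z$. First I would record the homogeneity relation. Directly from the series in definition \ref{def:wp_def}, substituting $\lambda z$ for $z$ and running $\omega$ over $\lambda\Lambda$ via the bijection $\omega \mapsto \lambda\omega$ of $\Lambda\setminus\{0\}$ onto $\lambda\Lambda\setminus\{0\}$, every summand scales by $\lambda^{-2}$ (resp.\ $\lambda^{-3}$ for the derivative), so that for all $\lambda \in \C^*$
\[
\wp(\lambda z, \lambda\Lambda) = \lambda^{-2}\wp(z, \Lambda), \qquad \wp^{\prime}(\lambda z, \lambda\Lambda) = \lambda^{-3}\wp^{\prime}(z, \Lambda).
\]
Equivalently, $\wp(z, \lambda\Lambda) = \lambda^{-2}\wp(z/\lambda, \Lambda)$.

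Next I would identify the lattice $\langle 1, \gamma\tau\rangle_\Z$. Multiplying through by the scalar $c\tau+d$ gives
\[
(c\tau+d)\langle 1, \gamma\tau\rangle_\Z = \langle c\tau+d, (c\tau+d)\gamma\tau\rangle_\Z = \langle c\tau+d, a\tau+b\rangle_\Z,
\]
since $(c\tau+d)\gamma\tau = a\tau+b$. Because $\gamma \in \SL_2(\Z)$ has $ad-bc = 1$, the base-change matrix $\left(\begin{smallmatrix} a & b\\ c & d\end{smallmatrix}\right)$ is invertible over $\Z$, so $a\tau+b$ and $c\tau+d$ form another $\Z$-basis of $\langle \tau, 1\rangle_\Z = \langle 1, \tau\rangle_\Z$. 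Hence $(c\tau+d)\langle 1, \gamma\tau\rangle_\Z = \langle 1, \tau\rangle_\Z$, i.e.\ $\langle 1, \gamma\tau\rangle_\Z = (c\tau+d)^{-1}\langle 1, \tau\rangle_\Z$. It is exactly the condition $ad-bc=1$ that guarantees equality of lattices here, rather than a mere inclusion.

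Combining the two steps with $\lambda = (c\tau+d)^{-1}$ and $\Lambda = \langle 1, \tau\rangle_\Z$ yields
\[
\wp(z, \gamma\tau) = \wp\bigl(z, (c\tau+d)^{-1}\Lambda\bigr) = (c\tau+d)^2\,\wp\bigl((c\tau+d)z, \tau\bigr),
\]
which is the first claimed identity. The second then follows either from the corresponding homogeneity relation for $\wp^{\prime}$ applied in the same way, or simply by differentiating the first identity in $z$: the chain rule produces one extra factor of $c\tau+d$, giving $(c\tau+d)^3\,\wp^{\prime}((c\tau+d)z, \tau)$.

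I expect no serious obstacle: the argument is essentially homogeneity plus a basis change. The only points demanding care are the direction of the scaling — whether one multiplies or divides by $c\tau+d$ — and tracking the correct power of $c\tau+d$ at each stage, so that the main task is bookkeeping rather than any substantive difficulty.
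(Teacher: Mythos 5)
Your proof is correct, and it organizes the argument differently from the paper. The paper proves the lemma by a single direct computation on the defining series: it writes out $\wp(z,\gamma\tau)$, multiplies each summand through by $(c\tau+d)^2$, and reindexes the lattice sum via $S_{a,b,c,d}(m,n)=m(c\tau+d)+n(a\tau+b)$, invoking $ad-bc=1$ exactly once to see that this substitution permutes the nonzero lattice points; the identity for $\wp^{\prime}$ is then declared analogous. You instead factor the same content into two standard lemmas: the homogeneity $\wp(\lambda z,\lambda\Lambda)=\lambda^{-2}\wp(z,\Lambda)$ for all $\lambda\in\C^*$, and the lattice identity $\langle 1,\gamma\tau\rangle_\Z=(c\tau+d)^{-1}\langle 1,\tau\rangle_\Z$, which again rests on unimodularity of $\gamma$. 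The underlying mechanism — rescaling of the summands plus a unimodular change of lattice basis — is identical, but your decomposition buys some clarity: it separates the analytic scaling from the arithmetic base change, makes explicit where $ad-bc=1$ enters (equality of lattices, not a mere inclusion, as you rightly emphasize), and yields the $\wp^{\prime}$ identity for free by differentiating the first identity in $z$, the chain rule supplying the extra factor of $c\tau+d$, where the paper only gestures at an analogous computation. One point both arguments use tacitly is the absolute convergence of the regularized series (summands are $O(|\omega|^{-3})$), which legitimizes the reindexing; this is equally implicit in your verification of homogeneity and in the paper's rearrangement, so neither proof has a gap the other avoids.
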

\begin{proof}
We calculate
\begin{align*}
\wp\left(z, \frac{a\tau+b}{c\tau+d}\right)&=\frac{1}{z^2}+\sum_{n^2+m^2\neq 0}\left(\frac{1}{(z-(m+n\frac{a\tau+b}{c\tau+d}))^2} -\frac{1}{(m+n\frac{a\tau+b}{c\tau+d})^2} \right)\\
&=(c\tau+d)^2\cdot \frac{1}{((c\tau+d)z)^2}\\
&+(c\tau+d)^2\sum_{m^2+n^2\neq 0}\left( \frac{1}{((c\tau+d)z-S_{a,b,c,d}(m, n))^2} -\frac{1}{(S_{a,b,c,d}(m, n))^2} \right)\\
&=(c\tau+d)^2\wp((c\tau+d)z, \tau),
\end{align*}
where we make use of the abbreviation $S_{a,b,c,d}(m, n)=m(c\tau+d)+n(a\tau+b)$ and the last equality follows from $ad-bc=1$. 
The identity for the derivative of $\wp$ is shown analogously.
\end{proof}

\begin{koro}\label{koro:gs_modulfunktion_1}
\begin{enumerate}
Let $\gamma \in \Gamma_0(\ell)$.
\item Let $\zeta_\ell \in \mu_\ell$ be an $\ell$-th root of unity. Then we have
\[ 
x(\zeta_\ell, q(\gamma\tau))=(c\tau+d)^2x(\zeta_\ell^d, \tau)\quad \text{and}\quad y(\zeta_\ell, q(\gamma\tau))=(c\tau+d)^3y(\zeta_\ell^d, \tau).
 \]
\item The function $p_1(q)$ defined in formula \eqref{eq:p1_def_1} transforms under action of $\gamma$ according to $p_1(q(\gamma\tau))=(c\tau+d)^2p_1(q)$.
\item Let $n$ be a divisor of $\ell-1$, $\chi: \F_\ell^{*} \rightarrow \mu_n$ a Dirichlet character of order $n$ and let
\[ 
V=\begin{cases}
x,\quad n \equiv 1 \mod 2,\\
y, \quad n \equiv 0 \mod 2
\end{cases}
\text{and}\quad 
e=\begin{cases}
2n,\quad n \equiv 1 \mod 2,\\
3n, \quad n \equiv 0 \mod 2.
\end{cases}
 \]
Then the $n$-th power of the function
\begin{equation}\label{eq:G_ln_lr}
G_{\ell, n, \chi}(q)=G_{\ell, n}(q)=\sum_{\lambda \in \F_\ell^*} \chi(\lambda)V(\zeta_\ell^{\lambda}, q),
 \end{equation}
which depends on $\chi$, transforms under action of $\gamma$ via $G_{\ell, n}(q(\gamma\tau))^n=(c\tau+d)^{e}G_{\ell, n}(q)^n$.
\item The functions $p_1(q)$ and $G_{\ell, n, \chi}(q)^n$ are meromorphic at the cusps and hence modular functions of the respective weight for $\Gamma_0(\ell)$. Furthermore, $G_{\ell, n}(q)^n$ is independent from the choice of the primitive $\ell$-th root of unity $\zeta_\ell \in \mu_\ell$ and thus well-defined. 
\end{enumerate}
\begin{proof}
\begin{enumerate}
\item For $k=\frac{1}{(2\pi i )^2}$
\[ 
x(\zeta_\ell, q(\tau))=k\wp\left(\frac{v}{\ell}, \tau\right)
 \]
holds for some $v \in \Z$. This implies
\begin{align*}
x(\zeta_\ell, q(\gamma\tau))&=k\wp\left(\frac{v}{\ell}, \gamma\tau\right)=k(c\tau+d)^2\wp\left(\frac{(c\tau+d)v}{\ell},\tau\right)\\
&=(c\tau+d)^2x\left(\exp\left(\frac{2\pi i (c\tau+d)v}{\ell}\right), q\right)
=(c\tau+d)^2x\left(\exp\left(\frac{2\pi i dv}{\ell}\right), q\right)\\
&=(c\tau+d)^2x(\zeta_\ell^d, q),
\end{align*}
where the penultimate equality immediately follows from the series expansion \eqref{eq:x_def_alt} of $x(w, q)$ on taking $c \equiv 0 \mod \ell$ into account. The equation for $y$ is derived in an analogue way.
\item $\gamma \in \Gamma_0(\ell)$ implies $d\not\equiv 0 \mod \ell$, hence the action of $\gamma$ permutes the summands in equation \eqref{eq:p1_def} on simultaneous multiplication by $(c\tau+d)^2$.
\item Without loss of generality let $n$ be odd. We calculate
\begin{align*}
G_{\ell, n}(q(\gamma\tau))&=\sum_{\lambda \in \F_\ell^*} \chi(\lambda)x(\zeta_\ell^{\lambda}, q(\gamma\tau))
=(c\tau+d)^2\sum_{\lambda \in \F_\ell^*} \chi(\lambda)x(\zeta_\ell^{d\lambda}, q(\tau))\\
&=(c\tau+d)^2\chi^{-1}(d) \sum_{a \in \F_\ell^*} \chi(a)x(\zeta_\ell^{a}, q(\tau))
=(c\tau+d)^2\chi^{-1}(d)G_{\ell, n}(q).
\end{align*}
Our claim is immediate, since $\chi$ takes values in $\mu_n$.
\item We recall the set of representatives for $\Gamma/\Gamma_0(\ell)$ which consists of $\{S_k, k=0, \ldots, \ell\}$ with
\[ 
S_k=\begin{pmatrix}
0 & -1 \\ 1 & k
\end{pmatrix}\quad \text{for}\quad 0 \leq k < \ell,\quad
S_\ell=\begin{pmatrix}
1 & 0\\
0 & 1
\end{pmatrix}.
 \]  
According to remark \ref{bem:rep_system_reicht} we compute the Fourier expansion of $x(\zeta_\ell, q(S_k\tau))$ for all $S_k$. For $k<\ell$ we obtain
\begin{align*}
x(\zeta_\ell, q(S_k\tau))=(\tau+k)^2x\left(\exp\left(\frac{2\pi i (\tau+k)v}{\ell}\right), q\right)=(\tau+k)^2x(\zeta_\ell^kq^{\frac{v}{\ell}}, q).
\end{align*}
Computing $x(\zeta_\ell^kq^{\frac{v}{\ell}}, q)$ using formula \eqref{eq:x_def} we directly see the $q$-expansion contains only finitely many negative exponents. An analogue statement can be shown for $y(w, q)$. From the construction of $p_1(q)$ as well as $G_{\ell, n}(q)^n$ it now follows that these modular functions are meromorphic at the cusps.\\
The independence of $G_{\ell, n}(q)^n$ from the choice of $\zeta_\ell$ is an immediate consequence of what we have already shown, since for $d \not\equiv 0 \mod \ell$ this implies
\[ 
\sum_{\lambda \in \F_\ell^*} \chi(\lambda)x(\zeta_\ell^{d\lambda}, q)=
\chi^{-1}(d)\sum_{\lambda \in \F_\ell^*} \chi(\lambda)x(\zeta_\ell^{\lambda}, q).
 \]
\end{enumerate}
\end{proof}
\end{koro}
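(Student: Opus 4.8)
The plan is to reduce every part to the transformation behaviour of the Weierstraß $\wp$-function from Lemma \ref{lem:wp_funktion_trafo}, combined with the series expansions of Lemma \ref{lem:x_y_def} and the crucial observation that $x(w,q)$ and $y(w,q)$ are invariant under $w\mapsto q^nw$ (visible from the sums over $n\in\Z$ in \eqref{eq:x_def_alt}). For part (1), I would write $\zeta_\ell=\exp(2\pi i v/\ell)$ and start from $x(\zeta_\ell,q(\tau))=\frac{1}{(2\pi i)^2}\wp(v/\ell,\tau)$. Applying Lemma \ref{lem:wp_funktion_trafo} extracts the factor $(c\tau+d)^2$ and rescales the argument to $(c\tau+d)v/\ell$, so that exponentiating produces the point $\exp(2\pi i(c\tau+d)v/\ell)=q^{cv/\ell}\zeta_\ell^d$. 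The decisive point is that $\gamma\in\Gamma_0(\ell)$ forces $\ell\mid c$, whence $q^{cv/\ell}$ is an integral power of $q$; the $q$-periodicity of $x$ then removes it and leaves $x(\zeta_\ell^d,\tau)$. The identity for $y$ follows verbatim from the derivative formula, with exponent $3$.

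Parts (2) and (3) are then formal consequences. For $p_1$ I would use its description \eqref{eq:p1_def} as $\sum_{\zeta\neq 1}x(\zeta,q)$: part (1) pulls out the common factor $(c\tau+d)^2$ and replaces each $\zeta$ by $\zeta^d$, and since $\ell\mid c$ together with $ad-bc=1$ forces $d$ to be invertible modulo $\ell$, the map $\zeta\mapsto\zeta^d$ merely permutes $\mu_\ell\setminus\{1\}$, leaving the sum unchanged. For the Gauß sum I apply part (1) to each summand, reindex $\lambda\mapsto d^{-1}\lambda$, and factor out $\chi^{-1}(d)$; raising to the $n$-th power annihilates this root of unity, because $\chi$ has order $n$, and multiplies the weight exponent $e/n$ (which is $2$ for odd $n$ and $3$ for even $n$) by $n$ to give exactly $e$.

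The genuine work lies in part (4). Here part (1) does not apply directly, since the cusp representatives $S_k$ do not belong to $\Gamma_0(\ell)$; instead I return to Lemma \ref{lem:wp_funktion_trafo} with $\gamma=S_k$, obtaining $x(\zeta_\ell,q(S_k\tau))=(\tau+k)^2x(\zeta_\ell^kq^{v/\ell},q)$ for $k<\ell$. By Remark \ref{bem:rep_system_reicht} it suffices to examine these finitely many expansions. Substituting $w=\zeta_\ell^kq^{v/\ell}$ into \eqref{eq:x_def} and inspecting the exponents of $q^{1/\ell}$ term by term shows that they are bounded below, so that $x$, and hence $p_1$ and $G_{\ell,n}(q)^n$ as finite combinations of such terms, is meromorphic at the cusps; the analogous estimate for $y$ uses \eqref{eq:y_def}. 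Finally, the independence of $G_{\ell,n}(q)^n$ from the chosen primitive root reduces to the reindexing computation of part (3): any other primitive root has the form $\zeta_\ell^d$ with $d$ invertible modulo $\ell$, which scales the sum by $\chi^{-1}(d)$ and therefore leaves the $n$-th power fixed.

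I expect the bounded-below estimate in the cusp analysis to be the main obstacle, both because it requires care with the range of validity $|q|<|w|<|q^{-1}|$ of the expansions when $w=\zeta_\ell^kq^{v/\ell}$, and because it is the only step that genuinely leaves the group $\Gamma_0(\ell)$ and so cannot be handled by the clean reindexing arguments that dispatch parts (1)--(3).
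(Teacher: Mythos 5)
Your proposal is correct and takes essentially the same route as the paper's proof: part (1) via Lemma \ref{lem:wp_funktion_trafo} combined with $\ell \mid c$ and the invariance of the expansion \eqref{eq:x_def_alt} under $w \mapsto qw$, parts (2) and (3) by the same reindexing argument extracting $\chi^{-1}(d)$, and part (4) by expanding $x(\zeta_\ell^k q^{v/\ell}, q)$ at the coset representatives $S_k$ and reusing the reindexing for the independence claim. Your added check of the annulus of validity $|q|<|w|<|q^{-1}|$ for $w=\zeta_\ell^k q^{v/\ell}$ is a detail the paper leaves implicit, and it indeed holds since $1\le v\le \ell-1$.
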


We finally construct the \textit{universal elliptic Gau{\ss} sums}.

\begin{koro}\label{koro:gs_modulfunktion_2}
Let $\ell$, $n$, $\chi$ be as in corollary \ref{koro:gs_modulfunktion_1}. Furthermore, let
\[ 
r=
\begin{cases}
\min\left\{r: \frac{n+r}{6} \in \N\right\},& n \equiv 1 \mod 2,\\ 
3,& n=2, \\
0,& \text{else},
\end{cases}
\text{and}\quad 
e_{\Delta}=\begin{cases}
\frac{n+r}{6}, & n\equiv 1 \mod 2,\\
1, & n=2,\\
\frac{n}{4}, & \text{else.}
\end{cases}
 \]
Then
\begin{equation}\label{eq:tau_ln_lr}
\tau_{\ell, n}(q):=\frac{G_{\ell, n}(q)^n p_1(q)^r}{\Delta(q)^{e_{\Delta}}}
 \end{equation}
is a modular function of weight $0$ for $\Gamma_0(\ell)$ which is holomorphic on $\mathbb{H}$ and whose coefficients lie in $\Q[\zeta_n]$.
\end{koro}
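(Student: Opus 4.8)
The plan is to read off all four assertions — weight $0$, $\Gamma_0(\ell)$-invariance, holomorphy on $\mathbb{H}$, and the coefficient field — directly from the transformation laws and cusp behaviour already recorded in corollary \ref{koro:gs_modulfunktion_1}, treating $\tau_{\ell,n}$ as a quotient of three factors whose individual behaviours are known. For the weight I would simply balance automorphy factors. By corollary \ref{koro:gs_modulfunktion_1} we have $G_{\ell,n}(q(\gamma\tau))^n=(c\tau+d)^e G_{\ell,n}(q)^n$ with $e=2n$ or $3n$, $p_1(q(\gamma\tau))=(c\tau+d)^2 p_1(q)$, and $\Delta$ is the classical weight-$12$ form with $\Delta(\gamma\tau)=(c\tau+d)^{12}\Delta(\tau)$. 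Hence for $\gamma\in\Gamma_0(\ell)$ the quotient picks up a factor $(c\tau+d)^{\,e+2r-12e_\Delta}$, and I would check case by case ($n$ odd; $n=2$; $n$ even) that the defining values of $r$ and $e_\Delta$ force $e+2r-12e_\Delta=0$. For instance in the odd case $e+2r-12e_\Delta=2n+2r-2(n+r)=0$; the other cases are equally mechanical. The exponent thus vanishes and $\tau_{\ell,n}(\gamma\tau)=\tau_{\ell,n}(\tau)$.

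Holomorphy on $\mathbb{H}$ is then immediate: $G_{\ell,n}(q)$ and $p_1(q)$ are holomorphic on $\mathbb{H}$ (their $q$-series \eqref{eq:x_def}, \eqref{eq:y_def}, \eqref{eq:p1_def_1} converge for $|q|<1$), while $\Delta$ is holomorphic and nowhere zero on $\mathbb{H}$, so a fixed branch of $\Delta^{e_\Delta}$ is holomorphic and invertible and the quotient is holomorphic. Meromorphy at the cusps — the remaining ingredient in ``modular function'' — I would take from corollary \ref{koro:gs_modulfunktion_1}(4), which already shows $G_{\ell,n}^n$ and $p_1$ are meromorphic at the cusps, together with the same property for $\Delta$ as a function for $\Gamma$; by remark \ref{bem:rep_system_reicht} it suffices to inspect the expansions at the representatives $S_k$, and a quotient of cusp-meromorphic functions is again cusp-meromorphic.

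The substantive new step is the coefficient field. Since $n\mid\ell-1$ we have $\gcd(n,\ell)=1$, so $\mathbb{Q}(\zeta_\ell,\zeta_n)=\mathbb{Q}(\zeta_{\ell n})$ and $\gal(\mathbb{Q}(\zeta_{\ell n})/\mathbb{Q}(\zeta_n))\cong\F_\ell^*$ acts by $\sigma_d\colon\zeta_\ell\mapsto\zeta_\ell^d$, $\zeta_n\mapsto\zeta_n$. A priori the coefficients of $G_{\ell,n}(q)^n$ lie in $\mathbb{Q}(\zeta_{\ell n})$, those of $p_1$ and $\Delta^{e_\Delta}$ in $\mathbb{Q}$. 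I would then reuse the computation behind corollary \ref{koro:gs_modulfunktion_1}(4): applying $\sigma_d$ amounts to replacing $\zeta_\ell$ by $\zeta_\ell^d$ while fixing $\chi(\lambda)\in\mu_n$, which yields $\sigma_d(G_{\ell,n})=\chi^{-1}(d)G_{\ell,n}$ and hence $\sigma_d(G_{\ell,n}^n)=\chi^{-n}(d)G_{\ell,n}^n=G_{\ell,n}^n$ because $\chi^n=1$. Thus every coefficient of $G_{\ell,n}^n$, and therefore of $\tau_{\ell,n}$, is fixed by all $\sigma_d$ and so lies in $\mathbb{Q}(\zeta_n)=\mathbb{Q}[\zeta_n]$.

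The main obstacle I anticipate is not any one of these steps alone but the requirement that $\Delta^{e_\Delta}$ genuinely be a single-valued factor transforming with \emph{trivial} multiplier on $\Gamma_0(\ell)$ — equivalently, that $\tau_{\ell,n}$ possess an honest integral $q$-expansion and be invariant under the translation $\tau\mapsto\tau+1$, which lies in $\Gamma_0(\ell)$. This is exactly what pins down the choice of $r$: for $n$ odd, $r$ is taken minimal so that $e_\Delta=(n+r)/6\in\N$, and for $n=2$ the correction $p_1^3$ is inserted precisely to replace the would-be half-integral exponent $\tfrac12$ by $e_\Delta=1$. I would therefore spend most care, in the even cases, confirming that $e_\Delta$ is a genuine non-negative integer — clearly so when $4\mid n$, and calling for an analogous $p_1$-correction when $n\equiv2\bmod 4$ — since only then is $\Delta^{e_\Delta}$ an honest weight-$12e_\Delta$ form of trivial multiplier and $G_{\ell,n}^n p_1^r/\Delta^{e_\Delta}$ a bona fide $\Gamma_0(\ell)$-invariant function with integral $q$-powers. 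Once that integrality is in hand, the weight balance, holomorphy, cusp behaviour and Galois descent combine to give the statement.
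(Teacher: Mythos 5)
Your proposal is correct and follows essentially the same route as the paper's proof: balancing the automorphy factors from corollary \ref{koro:gs_modulfunktion_1} to get weight $0$ (the paper does the odd case, $2n+2r-12\cdot\frac{n+r}{6}=0$, without loss of generality), holomorphy on $\mathbb{H}$ from the pole-free $q$-expansions of $x$ and $y$ at roots of unity together with $\Delta\neq 0$, cusp meromorphy from part (4) of that corollary, and Galois descent via $\sigma(G_{\ell,n})=\chi^{-1}(c)\,G_{\ell,n}$ for a generator $\sigma$ of $\gal(\Q[\zeta_n,\zeta_\ell]/\Q[\zeta_n])$, so that $G_{\ell,n}^n$ is fixed. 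Your closing caveat that $e_\Delta=\frac{n}{4}$ is integral only when $4\mid n$ is a sound observation the paper leaves implicit (in the intended application $n$ is a maximal prime power divisor of $\ell-1$, so an even $n$ is a power of $2$), but it does not change the argument.
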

\begin{proof}
Without loss of generality let $n$ be odd. Corollary \ref{koro:gs_modulfunktion_1} shows that
$G_{\ell, n}(q)^n$ and $p_1(q)$ are modular functions for $\Gamma_0(\ell)$ of weight $2n$ and $2$, respectively. Furthermore, it is known that the discriminant is a modular function of weight $12$ for $\Gamma$, hence also for subgroups. Since in addition $\Delta(q)\neq 0$ holds, the expression in question is likewise a modular function for $\Gamma_0(\ell)$ whose weight can obviously be computed to be
\[ 
2n+2r-12\frac{n+r}{6}=0.
 \]
Using formula \eqref{eq:x_def_alt} the poles of $x(\zeta_\ell, q(\tau))$ can easily be determined. Obviously, there is a pole if and only if $q^n=1$ or $q^n=\zeta_\ell$ holds for some $n$. This implies $|q|=|\exp(2\pi i \tau)|=1$, which contradicts $\tau \in \mathbb{H}$. Thus, the functions in the enumerator are holomorphic by construction. An analogue statement can be shown when using $y(\zeta_\ell, q(\tau))$. Furthermore, $\Delta(\tau)\neq 0$ holds on $\mathbb{H}$, which implies the assertion.\\
In order to prove the statement concerning the coefficients we first remark that the coefficients of $p_1(q)$ as well as those of $\Delta(q)$ lie in $\Z$. As the leading coefficient of $\Delta$ is $1$, this also holds for the coefficients of $\Delta(q)^{-1}$. In addition it is evident from the definition of the Gau{\ss} sum in corollary \ref{koro:gs_modulfunktion_1} that its coefficients lie in $\Q[\zeta_n, \zeta_\ell]$. Now let $c$ be a generator of $\F_\ell^*$ and $\sigma: \zeta_\ell \mapsto \zeta_\ell^c$, such that $\gal(\Q[\zeta_n, \zeta_\ell]/\Q[\zeta_n])$ is generated by $\sigma$. We calculate
\begin{align*} 
\sigma(G_{\ell, n}(q))&=\sum_{\lambda \in \F_\ell^*} \chi(\lambda) \sigma(V(\zeta_\ell^{\lambda}, q))
=\sum_{\lambda \in \F_\ell^*}\chi(\lambda)V(\zeta_\ell^{c\lambda}, q)\\
&=\chi^{-1}(c)\sum_{\lambda \in \F_\ell^*}\chi(c\lambda)V(\zeta_\ell^{c\lambda}, q)
=\chi^{-1}(c)G_{\ell, n}(q),
 \end{align*}
where the second equality follows from formula \eqref{eq:x_def} and \eqref{eq:y_def}, respectively, since $\sigma$ is a homomorphism. We deduce immediately that $G_{\ell, n}(q)^n$ is invariant under action of $\sigma$, which in turn implies the coefficients of the expression lie in $\Q[\zeta_n]$. 
\end{proof}

\section{Application to point-counting}\label{sec:abschnitt_elkies}

Since $\tau_{\ell, n}(q)$ fulfils the requirements of proposition \ref{prop:darstellung_besser}, the universal elliptic Gau{\ss} sum may be represented in terms of $j(q)$ and $m_\ell(q)$. We now present an approach how this can be used to determine the order of the group of points on a concrete elliptic curve $E: Y^2=X^3+aX+b$ over a field $\F_p$.\\

We assume that the $j$-invariant of the curve $E$ in question is different from $0$ and $1728$ and we consider Elkies primes $\ell$, for which the characteristic equation \eqref{eq:char_gl} of the Frobenius homomorphism $\phi_p$ factors into linear factors over $\F_\ell$. We denote a maximal prime power divisor of  $\ell-1$ by $n$ and $\chi: (\Z/\ell\Z)^* \rightarrow \langle \zeta_n \rangle$ denotes a character of order $n$. Further let $\lambda$ be an eigenvalue of $\phi_p$ modulo $\ell$, which yields $t\equiv \lambda+ \frac{p}{\lambda} \mod \ell$. Using the elliptic Gau{\ss} sum \eqref{eq:ell_gs} from \cite{Mi1} we obtain the identity 
\begin{equation}\label{eq:gleichung_konkret_2}
\frac{G_{\ell, n, \chi}(E)^m}{G_{\ell, n, \chi^m}(E)}(G_{\ell, n, \chi}(E)^n)^q=\chi^{-m}(\lambda),
\end{equation}
as presented in equation \eqref{eq:lambda_aus_ell_gs}, where $p=nq+m$ holds.\\

Assuming we have precomputed the universal elliptic Gau{\ss} sums we dispose of the value
\begin{equation}
\tau_{\ell, n}(q)=\frac{G_{\ell, n, \chi}(q)^n p_1^r(q)}{\Delta(q)^{e_{\Delta}}}
\end{equation}
as a rational expression $R$ in terms of the Laurent series $m_\ell(q)$ and $j(q)$ according to proposition \ref{prop:darstellung_besser}, where $r$ and $e_{\Delta}$ are defined in corollary \ref{koro:gs_modulfunktion_2}.\\
By means of the Deuring lifting theorem from \cite[p.~184]{Lang} we lift the curve $E/\F_p$ in question to a curve $E_0$ over a number field $K$. This means there is a prime ideal $\mathfrak{P} \subset \OO_K$ with residue field $\F_p$ such that the reduction of $E_0$ modulo $\mathfrak{P}$ is a non-singular elliptic curve which is isomorphic to $E$. Now using the specialisation $q \mapsto q(\tau)=\exp(2\pi i \tau)$, where $\tau$ denotes the value $\tau$ associated to $E_0$ from theorem \ref{satz:iso_kurve_gitter}, we transfer the representation for $\tau_{\ell, n}(q)$ we computed by means of the Tate curve to the special curve $E_{q(\tau)}$. Subsequently, we make use of the isomorphism $\psi: E_{q(\tau)} \tilde{\rightarrow}\ E_0$ defined in theorem \ref{satz:tate_kurve_eig}. Taking into account the transition from multiplicative to additive structure (cf. theorem \ref{satz:tate_kurve_eig}), we find that for $a \in \Z$
\[ 
\psi(x(\zeta_\ell^a, q(\tau)))=(aP)_x,\quad \psi(y(\zeta_\ell^a, q(\tau)))=(aP)_y 
 \]
holds for an $\ell$-torsion point $P$ on $E_0$. Furthermore $\Delta(q(\tau))=\Delta\left(E_{q(\tau)}\right)$ corresponds to the value $\Delta(E_0)$ and $j(q(\tau))=j\left(E_{q(\tau)}\right)$ to the quantity $j(E_0)$.
This implies the equality
$\psi(G_{\ell, n, \chi}(q(\tau)))=G_{\ell, n, \chi}(E_0)$ 
holds and we can use the rational expression for $\tau_{\ell, n}$, which we assume to have been precomputed in a general setting, to determine the elliptic Gau{\ss} sum  $G_{\ell, n, \chi}(E_0)$ and, as will now be shown, also $G_{\ell, n, \chi}(E)$.\\

Since in the cases we are interested in $\ell=O(\log p)$ holds, formulae \eqref{eq:x_def} and \eqref{eq:y_def} as well as the equality of ideals $(\ell)=(1-\zeta_\ell)^{\ell-1}$ in $\Q[\zeta_\ell]$ imply the denominators of the coefficients of the Laurent series are invertible modulo $p$ and thus all calculations can be reduced modulo the prime ideal $\mathfrak{P}\mid(p)$. Hence, we finally obtain the formula
\begin{equation}\label{eq:gausssumme_konkret}
\frac{G_{\ell, n, \chi}(E)^n p_1(E)^r}{\Delta(E)^{e_{\Delta}}}=R(m_\ell(E), j(E))
\end{equation}
on $E/\F_p$. The values are now those associated to the curve $E$ in question. These quantities on $E$ can be determined using the well-known formulae for $\Delta(E)$ and $j(E)$ as well as  the formulae to compute $p_1(E)$ as derived e.~g. in \cite[pp.~269--271]{Morain}.\\

Here the value of $p_1(E)$ depends on $m_\ell(E)$, which is found as a root of the polynomial $M_\ell(X, j(E))$, obtained from the precomputed polynomial $M_\ell(X, Y)$. If $\ell$ is an Elkies prime, $m_\ell(E)$ lies in $\F_p$. The possible values for $m_\ell$ correspond to the eigenvalues $\lambda$ and $\mu$ of $\phi_p$ (cf. section \ref{sec:ell_kurven}). Since the polynomial $M_\ell(X, j(E))$ does not have a double root according to \cite[p.~109]{Mueller}, we obtain two possibilities for $m_\ell(E)$.\\

Now equation \eqref{eq:gausssumme_konkret} can be transformed to
\begin{equation}\label{eq:gaussssumme_konkret_2}
G_{\ell, n, \chi}(E)^n =\frac{R(m_\ell(E), j(E)) \Delta(E)^{e_{\Delta}}}{p_1(E)^r}.
 \end{equation}

\begin{bem}
\begin{enumerate}
\item If the $n$-th cyclotomic polynomial over $\F_p$ is irreducible, the adjustment by multiplying by $p_1(E)$, $\Delta(E)$ on the curve can be omitted, as these values lie in $\F_p$ and the roots of unity are $\F_p$-linearly independent. This allows in particular to save the cost for computing  $p_1(E)$, which is non-negligible. However, this condition is evidently not met for $n=2$, which means we cannot avoid the computation of $p_1(E)$ (or rather $p_1(E)^3$). Some time might be saved, though, provided $\ell-1 \equiv 0 \mod 4$ holds.

\item According to proposition \ref{prop:darstellung_besser} the quantity
\[ 
\frac{\partial M_\ell}{\partial Y}(m_\ell(E), j(E))
 \]
is used as the denominator of the rational expression $R$. We remark that this value may be zero in isolated cases, as was already mentioned in \cite[p. 110]{Mueller}. In such a case one can usually compute the expression using the second possible value for $m_\ell(E)$. If this yields zero as well, the prime $\ell$ cannot be used within our method.\\

We have to guarantee the value $p_1$ occurring in the denominator in equation \eqref{eq:gaussssumme_konkret_2} does not vanish modulo $p$. Since we consider curves whose $j$-invariants do not equal $0$ nor $1728$,
we obtain $a\cdot b \neq 0$.
Furthermore, one can show that the constant term of $M_\ell(X, j(E))$ exhibits the value $\ell^s$ and is thus different from zero modulo $p$.
Hence, $m_\ell(E) \neq 0$ follows and using the formulae from \cite[pp.~270]{Morain} we conclude
\[ 
p_1=0 \Leftrightarrow \frac{\partial M_\ell}{\partial Y}(m_\ell(E), j(E))=0.
 \]
So formula \eqref{eq:gaussssumme_konkret_2} can be applied whenever the rational expression for a value of $m_\ell(E)$ can be determined.
\end{enumerate}
\label{bem:elkies_m_nenner_nicht_null}
\end{bem}

Now it suffices to compute the $q$-th power $(G_{\ell, n}(E)^n)^q$ in order to determine the discrete logarithm of $\lambda$ in $(\Z/\ell\Z)^*$ modulo $n$ for primes $p \equiv 1 \mod n$ using equation \eqref{eq:gleichung_konkret_2}. Generalizing the universal elliptic Gau{\ss} sums in order to precompute the elliptic Jacobi sums $\frac{G_{\ell, n, \chi}(E)^m}{G_{\ell, n, \chi^m}(E)}$ from equation \eqref{eq:gleichung_konkret_2} we can determine $\lambda$ for arbitrary large primes $p$, as will be shown in a separate paper. The modular information may then be composed to derive the values of $t$ and $\#E(\F_p)$ as in Schoof's algorithm.
\footnotesize{
\bibliography{lit}}

\begin{thebibliography}{10}

\bibitem{Apostol}
Tom~M. Apostol.
\newblock {\em Modular functions and {D}irichlet series in number theory}.
\newblock Springer-Verlag, New York-Heidelberg, 1976.
\newblock Graduate Texts in Mathematics, No. 41.

\bibitem{BoMoSaSc}
Alin Bostan, Fran{\c{c}}ois Morain, Bruno Salvy, and {{\'E}}ric Schost.
\newblock Fast algorithms for computing isogenies between elliptic curves.
\newblock {\em Math. Comp.}, 77(263):1755--1778, 2008.

\bibitem{BrLaSu}
Reinier Br{{\"o}}ker, Kristin Lauter, and Andrew~V. Sutherland.
\newblock Modular polynomials via isogeny volcanoes.
\newblock {\em Math. Comp.}, 81(278):1201--1231, 2012.

\bibitem{Cohen}
Paula Cohen.
\newblock On the coefficients of the transformation polynomials for the
  elliptic modular function.
\newblock {\em Math. Proc. Cambridge Philos. Soc.}, 95(3):389--402, 1984.

\bibitem{Cox}
David~A. Cox.
\newblock {\em Primes of the form {$x^2 + ny^2$}}.
\newblock John Wiley \& Sons, Inc., New York, 1989.

\bibitem{Enge}
Andreas Enge.
\newblock Computing modular polynomials in quasi-linear time.
\newblock {\em Math. Comp.}, 78(267):1809--1824, 2009.

\bibitem{CIDE}
Jens Franke, Thorsten Kleinjung, Andreas Decker, and Anna Grosswendt.
\newblock Format of the certificate (version 0.1).
\newblock 2012.

\bibitem{GaMo}
Pierrick Gaudry and Fran{\c{c}}ois Morain.
\newblock Fast algorithms for computing the eigenvalue in the
  {S}choof-{E}lkies-{A}tkin algorithm.
\newblock In {\em I{SSAC} 2006}, pages 109--115. ACM, New York, 2006.

\bibitem{Iwaniec}
Henryk Iwaniec.
\newblock {\em Topics in classical automorphic forms}, volume~17 of {\em
  Graduate Studies in Mathematics}.
\newblock American Mathematical Society, Providence, RI, 1997.

\bibitem{Koblitz}
Neal Koblitz.
\newblock {\em Introduction to elliptic curves and modular forms}, volume~97 of
  {\em Graduate Texts in Mathematics}.
\newblock Springer-Verlag, New York, second edition, 1993.

\bibitem{Lang}
Serge Lang.
\newblock {\em Elliptic functions}, volume 112 of {\em Graduate Texts in
  Mathematics}.
\newblock Springer-Verlag, New York, second edition, 1987.

\bibitem{MaMu}
Markus Maurer and Volker M{{\"u}}ller.
\newblock Finding the eigenvalue in {E}lkies' algorithm.
\newblock {\em Experiment. Math.}, 10(2):275--285, 2001.

\bibitem{Mi2}
P.~Mih\u{a}ilescu.
\newblock New approaches to the sea algorithm.

\bibitem{Mi_CIDE}
Preda Mih\u{a}ilescu.
\newblock Dual elliptic primes and applications to cyclotomic primality
  proving.
\newblock {\em Mathematica Gottingensis}, 2006.

\bibitem{Mi1}
Preda Mih\u{a}ilescu.
\newblock Elliptic curve gauss sums and counting points.
\newblock {\em Mathematica Gottingensis}, 2006.

\bibitem{MiMoSc}
Preda Mih\u{a}ilescu, Fran{\c c}ois Morain, and {\'E}ric Schost.
\newblock Computing the eigenvalue in the {S}choof-{E}lkies-{A}tkin algorithm
  using abelian lifts.
\newblock In {\em I{SSAC} 2007}, pages 285--292. ACM, New York, 2007.

\bibitem{MiVu}
Preda Mih\u{a}ilescu and Victor Vuletescu.
\newblock Elliptic gauss sums and applications to point counting.
\newblock {\em Journal of Symbolic Computation}, 45:825--836, 2010.

\bibitem{Morain}
Fran{\c{c}}ois Morain.
\newblock Calcul du nombre de points sur une courbe elliptique dans un corps
  fini: aspects algorithmiques.
\newblock {\em J. Th{\'e}or. Nombres Bordeaux}, 7(1):255--282, 1995.
\newblock Les Dix-huiti{{\`e}}mes Journ{{\'e}}es Arithm{{\'e}}tiques (Bordeaux,
  1993).

\bibitem{Mueller}
Volker M{\"u}ller.
\newblock {\em Ein Algorithmus zur Bestimmung der Punktanzahl elliptischer
  Kurven {\"u}ber endlichen K{\"o}rpern der Charakteristik gr{\"o}{\ss}er
  drei}.
\newblock PhD thesis, Universit{\"a}t des Saarlandes, 1995.

\bibitem{Neukirch}
J\"urgen {Neukirch}.
\newblock {\em {Algebraische Zahlentheorie.}}
\newblock Berlin: Springer, reprint edition, 2007.

\bibitem{Newman}
Morris Newman.
\newblock Construction and application of a class of modular functions.
\newblock {\em Proc. London. Math. Soc.}, 7(3):334--350, 1957.

\bibitem{Rademacher}
Hans Rademacher.
\newblock The {R}amanujan identities under modular substitutions.
\newblock {\em Trans. Amer. Math. Soc.}, 51:609--636, 1942.

\bibitem{Schoof}
Ren{\'e} Schoof.
\newblock Counting points on elliptic curves over finite fields.
\newblock {\em J. Th{\'e}or. Nombres Bordeaux}, 7(1):219--254, 1995.
\newblock Les Dix-huiti{\`e}mes Journ{\'e}es Arithm{\'e}tiques (Bordeaux,
  1993).

\bibitem{Shimura}
Goro Shimura.
\newblock {\em Introduction to the arithmetic theory of automorphic functions}.
\newblock Publications of the Mathematical Society of Japan, No. 11. Iwanami
  Shoten, Publishers, Tokyo; Princeton University Press, Princeton, N.J., 1971.
\newblock Kan{\^o} Memorial Lectures, No. 1.

\bibitem{Silverman_advanced}
Joseph~H. Silverman.
\newblock {\em Advanced topics in the arithmetic of elliptic curves}, volume
  151 of {\em Graduate Texts in Mathematics}.
\newblock Springer-Verlag, New York, 1994.

\bibitem{Silverman}
Joseph~H. Silverman.
\newblock {\em The arithmetic of elliptic curves}, volume 106 of {\em Graduate
  Texts in Mathematics}.
\newblock Springer, Dordrecht, second edition, 2009.

\bibitem{Sutherland}
Andrew~V. Sutherland.
\newblock On the evaluation of modular polynomials.
\newblock In {\em A{NTS} {X}---{P}roceedings of the {T}enth {A}lgorithmic
  {N}umber {T}heory {S}ymposium}, volume~1 of {\em Open Book Ser.}, pages
  531--555. Math. Sci. Publ., Berkeley, CA, 2013.

\bibitem{Tate}
John Tate.
\newblock A review of non-{A}rchimedean elliptic functions.
\newblock In {\em Elliptic curves, modular forms, \& {F}ermat's last theorem
  ({H}ong {K}ong, 1993)}, Ser. Number Theory, I, pages 162--184. Int. Press,
  Cambridge, MA, 1995.

\bibitem{Washington}
Lawrence~C. Washington.
\newblock {\em Elliptic curves: Number theory and Cryptography}.
\newblock Discrete Mathematics and its Applications (Boca Raton). Chapman \&
  Hall/CRC, Boca Raton, FL, second edition, 2008.

\end{thebibliography}

\end{document}